\newtheorem{theorem}{Theorem}[section]
\newtheorem{lemma}[theorem]{Lemma}
\newtheorem{proposition}[theorem]{Proposition}
\newtheorem{corollary}[theorem]{Corollary}
\newcommand{\Pin}[1]{{\mathbb P}^{#1}}
\newcommand{\rk}[1]{{\rm rk}\,(#1)}
\newcommand{\T}{\mathcal{T}}
\newcommand{\tensor}{\otimes}
\theoremstyle{definition}
\newtheorem{example}[theorem]{Example}
\theoremstyle{remark}
\newtheorem{remark}[theorem]{Remark}
\numberwithin{equation}{section}
\begin{document}

\title{The Varieties of Bifocal Grassmann Tensors}
\author{Marina Bertolini}
\address{{\bf{Marina Bertolini}}. Universit\`a degli Studi di Milano \\
Dipartimento di Matematica ``F. Enriques''\\
Via Cesare Saldini 50\\
20133 Milano \\
E-mail: {\tt marina.bertolini@unimi.it}
}
\author{Gilberto Bini} 
\address{{\bf{Gilberto Bini}}. Universit\`a degli Studi di Palermo \\
Dipartimento di Matematica e Informatica \\
Via Archirafi 34 \\
90123 Palermo \\
E-mail: {\tt gilberto.bini@unipa.it}
}
\author{Cristina Turrini}
\address{{\bf{Cristina Turrini}}. Universit\`a degli Studi di Milano \\
Dipartimento di Matematica ``F. Enriques"\\
Via Cesare Saldini 50\\
20133 Milano \\
E-mail: {\tt cristina.turrini@unimi.it}
}

\date{\today}


\begin{abstract}

Grassmann tensors arise from classical problems of scene reconstruction in computer vision. In particular, bifocal Grassmann tensors, related to a pair of projections from a projective space onto view-spaces of varying dimensions, generalise the classical notion of fundamental matrices. In this paper we study in full generality the variety of bifocal Grassmann tensors focusing on its birational geometry. To carry out this analysis, every object of multi-view geometry is declined both from an algebraic and geometric point of view, e.g., the duality between the view spaces and the space of rays is explicitly described via polarity. Next, we deal with the moduli of bifocal Grassmann tensors, thus showing that this variety is both birational to a suitable homogeneous space and endowed with a dominant rational map to a Grassmannian.
\vskip 0.5cm
\noindent \textbf{Keywords.} Multi-view Geometry, Grassmann Tensors, Fundamental Matrices, Group Actions.
\end{abstract}

\maketitle

\section{Introduction}

Recently, several authors have been interested in the study of some algebraic varieties, which arise within the branch of computer vision called {\em Multiple View Geometry}. In this context, the most investigated varieties are the multiview varieties (see, for example, \cite{L-M}, \cite{A-S-T},\cite{BigL},\cite{I-M-U}), the varieties of trifocal and quadrifocal tensors (\cite{oe1}, \cite{al-to1}, \cite{al-to2}, \cite{Hart-Zi2}, \cite{oe2}) and the critical loci varieties (\cite{be-ber-no-tu}, \cite{tubbLAIA},  \cite{ber-tu-no1}). 

The analysis of the varieties of trifocal and quadrifocal tensors concerns tensors which are defined in the classical case of reconstruction of a three-dimensional static scene from three or four
two-dimensional images. Moreover, they fit in the wide study of Grassmann tensors and their moduli spaces. Grassmann tensors (or multifocal tensors) have been introduced in \cite{Hart-Schaf} as a means of reconstructing a scene in a high dimensional space from its projection by a suitable number of images. More specifically,  they describe the relationships existing between the different images of the same point of the scene taken from different cameras. Moreover, the first and the third author have studied critical loci for projective reconstruction from multiple views, \cite{tubbLAIA},
\cite{be-tur1}, and in this setting Grassmann tensors play a fundamental role \cite{be-ber-no-tu},  \cite{ber-tu-no1}.

In this context, we propose to study in full generality the variety of bifocal Grassmann tensors (or generalized fundamental matrices), which may be viewed as a parameter space of Grassmann tensors of two views from a $k$ dimensional projective space to two image spaces of dimensions $h_1$ and $h_2$, respectively.  In particular, we focus on the birational geometry of this variety. Hence this paper takes into account the behaviour of generic bifocal Grassmann tensors and can be thought of as a first step towards the analisys of the birational geometry of the variety of trifocal Grassmann tensors.

To carry out this analysis, we preliminarily need to decline some basic notions from multiview geometry into a purely algebraic setting. More precisely, computer vision and algebraic geometry are classically linked because taking a picture is described as a linear projection from the ambient space $\Pin{3}$ to a view plane $\Pin{2}$. Additionally, other types of shootings, like videos or segmented scenes, have been more recently interpreted as projections from higher dimensional spaces $\Pin{k}$ to $\Pin{h}$, for suitable $k$ and $h$.

In this setting, a {\it scene} is a set of points $\{X_i\} \in \Pin{k}, i =1, . . . ,N,$ a {\it camera} is a projection from $\Pin{k}$ onto a view space $\Pin{h}$, $(h < k)$, from a linear center. Once homogeneous coordinates have been chosen in $\Pin{k}$ and $\Pin{h}$, the camera can be identified with a $(h + 1) \times (k + 1)$ matrix $P$ of maximal rank, and the center $C_P$ is its right annihilator, hence a $(k-h-1)$-space defined by the linear subspaces of $\Pin{k}$, given by the rows of $P$. These subspaces can also be identified with points of the dual space $(\Pin{k})^{\vee}$ where they span a linear space of dimension $h$. Finally, the right action of $GL(k + 1)$ on $P$ corresponds to a change of coordinates in $\Pin{k}$, while the left action of $GL(h+1)$ can be thought of as a change of coordinates in the view space $\Pin{h}$. 

In the first section of the paper (Section $2$), we frame the above definitions in an algebraic context and we provide the corresponding geometric interpretation of all the involved spaces, i.e. the ambient space, the view space, the space of rays (where a {\it ray} is a fiber of the projection map) and the wedge product spaces of all of them. 
In particular, in the case of one projection, we give an explicit interpretation of the duality between the space of rays and the view space via a polarity correspondence associated with a suitable quadric in $\Pin{k}$, which naturally arises from the projection matrix. Next, we focus on the case of two projections because this is the setting where bifocal Grassmann tensors can be defined, and we describe the action of the natural groups on all these spaces and, again, on their wedge products.

Finally, in the paper \cite{BBBT1}, the authors have computed the rank of bifocal and trifocal Grassmann tensors using a canonical form of the tensors obtained via the actions described above. Here, in the case of bifocal tensors, we use this canonical form in order to give a minimal decomposition of the tensor which has a particular and interesting geometric interpretation.

As a conclusion of this first part, in order to clarify all the previous reasonings, we provide an example for which we perform explicitly all the computations (Example \ref{examplep4p3}).

\
In Sections $3$ and $4$, we deal with bifocal Grassmann tensors and their moduli. Bifocal Grassmann tensors (or generalized fundamental matrices) have been extensively studied in \cite{tubbAMPA}, where their rank is computed and where, in Section $4$  a seminal idea on the structure of their variety is contained. 
Starting from that, in this paper we describe the birational structure of the variety ${\mathcal X}_{(\alpha_1, \alpha_2)}$ of bifocal Grassmann tensors for pairs of projections from $\Pin{k}$ to $\Pin{h_1}$ and to $\Pin{h_2}$ for any admissible choice of $k, h_1, h_2 $ and of a {\it profile} $(\alpha_1,\alpha_2)$ with $\alpha_1+\alpha_2=k+1$, $1 \leq \alpha_i \leq h_i$, $i=1,2$ (\cite{H-K}). 

\vskip 1.5cm

The main results obtained in the paper are the following:

\

\noindent \textbf{Theorem $1$} (see Theorem \ref{homogeneous})
	{\em For each pair $(\alpha_1, \alpha_2)$ corresponding to a profile, the variety of bifocal Grassmann tensors ${\mathcal X}_{(\alpha_1, \alpha_2)}$ is birational to a homogeneous space with respect to the action of $GL(h_1+1) \times GL(h_2+1)$.}

\

\noindent \textbf{Theorem $2$} (see Theorem \ref{ratfibr}) {\em Let $\alpha_1, \alpha_2$ be a pair of non-negative integers such that $\alpha_1+\alpha_2=k+1$. Fix $h_1, h_2$ such that $k > \max \{h_1, h_2\}$ and $k \leq h_1+h_2+1$, as well as a $(k+1)$-dimensional vector space $U$. Set $s_j=h_j+1-\alpha_j$ for $j=1,2$. Then there exists a dominant rational map $\Phi: {\mathcal X}_{(\alpha_1, \alpha_2)} \dashrightarrow G(i, U^{\vee})$ such that the following hold:
\begin{itemize}
    \item $G(i,U^{\vee})$ is birationally ${\mathcal G}$-equivariant, that is, there exists a non-empty open set ${\mathfrak U}$ of ${\mathcal X}_{(\alpha_1,\alpha_2)}$ such that $\Psi(g.p)=\Psi(p)$ for every $p \in {\mathfrak U}$ and every $g \in {\mathcal G}$;
    \item the general orbit is isomorphic to $PGL(i)$,
\end{itemize}	 
where the group ${\mathcal G}$ is the $({\mathbb C^*})^2/{\mathbb C}^*$ quotient of a group isomorphic to  $GL(i) \times GL(h_1+1)  \times GL(h_2+1)$.}
\

Actually, in Theorem \ref{ratfibr} we prove this result for the variety ${\mathcal X}_{(s_1,s_2)}$ which is birational to ${\mathcal X}_{(\alpha_1, \alpha_2)}$, as introduced and discussed before Remark \ref{strat}.
 
Throughout,  we work over the field of complex numbers.

\section{A review on linear projections}

\subsection{Notations}

Let $V$ be a finite dimensional vector space. We denote by ${\mathbb P}(V)$ the projective space of one-dimensional subspaces of $V$. In what follows, $V^{\vee}$ will denote the dual vector space of $V$. Let $F^{\vee} :V_2^{\vee} \rightarrow V_1^{\vee}$ be the transpose map of a linear map $F: V_1 \to V_2$ between finite dimensional vector spaces. If $W$ is a subspace of $V$, the orthogonal space $W^\perp \subseteq V^{\vee}$ consists of all the linear forms on $V$ vanishing on $W$. Then the dual vector space $(V/W)^{\vee}$ is isomorphic to $W^\perp $. This isomorphism sends a linear form $f:V/W \rightarrow \mathbb{C}$ to the linear form $f \circ p_W: V \rightarrow \mathbb{C}$, where $p_W: V \to V/W$ denotes the natural linear projection.

\subsection{The case of one projection} Let $U$ be a $(k+1)$-dimensional vector space. Fix a proper subspace $C \subset U$ of dimension $k-h$ (with $h < k)$, and consider the quotient map $p_C: U \rightarrow U/C$. Notice that $U/C$ can be identified with the $(h+1)-$dimensional space of all the $(k+1-h)$-dimensional subspaces of $U$ containing $C$. Recall the isomorphism $C^\perp \simeq (U/C)^{\vee}$.

\medskip

\subsubsection{Geometric interpretation} Let ${\mathbb P}(U)$ be the projective space associated with $U$, and  $\pi_C: {\mathbb P}(U) \dashrightarrow {\mathbb P}(U/C)$, the  rational map induced by $p_C$, which is well-defined everywhere except on ${\mathbb P}(C)$. As mentioned in the Introduction, in the computer vision setting, we will call $\pi_C$ {\it{camera}} and ${\mathbb P}(C)$ {\it{center}} of the camera $\pi_C$; the target space ${\mathbb P}(U/C)$ is the {\it{space of rays.}} We deduce that a point of ${\mathbb P}(U/C)$ can be identified with a projective linear $(k-h)$-dimensional subspace of ${\mathbb P}(U)$ containing the center ${\mathbb P}(C)$, which will be called a {\it{ray}}. As usual, we will identify ${\mathbb P}(U^\vee)$ with the linear space of hyperplanes of $U$ so that we can identify  ${\mathbb P}((U/C)^{\vee})$ with the subspace of hyperplanes containing ${\mathbb P}(C)$, as $(U/C)^{\vee} \simeq C^{\perp}$. According to the standard setting introduced for the study of algebraic varieties arising in computer vision (see, e.g., \cite{oe1}, \cite{oe2}), we will call 
${\mathbb P}((U/C)^{\vee})$ the {\it{view space}}.

\medskip

In the following, it will be useful to have a model of the target space embedded in ${\mathbb P}(U)$: for this purpose one can choose a projective subspace $L \subset {\mathbb P}(U) $ of dimension $h$, i.e., a {\it screen}, such that $L \cap C = \emptyset$.  Indeed, in this case, the projection map sends a point of ${\mathbb P}(U) \setminus   {\mathbb P}(C)$ to the point of intersection of its ray with $L$.

\subsubsection{The coordinate framework} Fix bases in $U$ and in $U/C$. Then we obtain a representative projection matrix $A$ of size $(h+1) \times (k+1)$ and rank $h+1$ for $h<k$ (defined only up to a non-zero constant). The columns of $A$ generate $U/C$ and the rows of $A$ generate $C^\perp \subset U^\vee$.

\subsection{The case of two projections} Let us choose two proper subspaces $C_1$ and $C_2$ in $U$ such that $\dim(C_1)=k-h_1$, $\dim(C_2)=k-h_2$ and $C_1 \cap C_2=\{0\}$. By Grassmann's Formula, the dimension of the span $C_1+C_2$ is $2k-h_1-h_2=k+1-(h_1+h_2+1-k)$. Thus $C_1+C_2$ has codimension $i:=h_1+h_2-k+1$ in $U$.

Denote by $p_1: U \rightarrow U/C_1$ and $p_2: U \rightarrow U/C_2$ the corresponding projection maps. Let us focus on $p_1: U \rightarrow U/C_1$; a similar statement holds for $p_2$. The image $E^2_1$ of $C_2$  via $p_1$  is the subspace $p_1(C_2)=(C_1+C_2)/C_1$ in $U/C_1$, which is isomorphic to $C_2$, as $C_2 \cap C_1=\{0\}$. Let us consider the projection with center $E^2_1,$ $$p^2_1 : U/C_1 \rightarrow  (U/C_1)/ ((C_1+C_2)/C_1) \simeq  U/(C_1+C_2)$$ 
and its composition with $p_1$, namely
\begin{equation}
\label{namely}
U \overset{p_1}{\longrightarrow} U/C_1 \overset{p^2_1}{\longrightarrow} U/(C_1+C_2).
\end{equation}

Analogously, with obvious meaning of the symbols, we have
$$
U \overset{p_2}{\longrightarrow} U/C_2 \overset{p^1_2}{\longrightarrow} U/(C_1+C_2).
$$
Since $p_1$ and $p_2$ are the projections onto $U/C_1$ and $U/C_2$, respectively, and $p^2_1$, $p^1_2$ are induced by $p_1$ and $p_2$, we have the following commutative diagram:
\begin{equation}
\label{primodiagramma}
\xymatrix{
	U \ar[d]_{p_2} \ar[r]^{p_1} & U/C_1 \ar[d]^{p^2_1} \\
	U/C_2 \ar[r]_{\! \! \! \! \! \!p^1_2} & U/(C_1+C_2).
}
\end{equation}

In the dual setting, the vector space $U^{\vee}$ will contain the subspaces $C_1^{\perp}$ and $C_2^{\perp}$ of dimension $h_1+1$ and $h_2+1$, which are isomorphic to $(U/C_1)^{\vee}$ and $(U/C_2)^{\vee}$ respectively. 
Since $(C_1+C_2)^{\perp} = C_1^{\perp} \cap C_2^{\perp}$, we have
\begin{equation}
\label{capandcup}
\left( U/C_1\right )^\vee \cap \left( U/C_2\right)^\vee = \left(U/\left(C_1 + C_2\right) \right)^\vee.
\end{equation}

As a consequence of Grassmann's formula, we get
\begin{eqnarray*}
	\dim \left( \left( U/C_1\right)^\vee \cap \left(U/C_2 \right)^\vee\right)&=& \dim\left( \left(U/C_1 \right)^\vee\right)+\dim\left(\left( U/C_2\right)^{\vee}\right) \\&-&\dim \left( \left(U/C_1\right)^{\vee} + \left(U/C_2 \right)^\vee\right)=i.
\end{eqnarray*}

By dualizing Diagram \ref{primodiagramma}, we have
\begin{equation}
\label{diagramma}
\xymatrix{
	(U/(C_1+C_2))^{\vee} \simeq (U/(C_1))^{\vee} \cap ((U/C_2))^{\vee}   \ar[r]^{{\,\,\,\,\,\,\,\,\,\,\,\,\,\,\,\,\,\,\,\,\,\,\,\,\,\,\,\,\,\,\,\,\,\,\,\,\,\,\,\,\,\,\,\,\,\,\,\,\,\,\,\,\,\,  p^2_1}^{\vee}} \ar[d]^{{p^1_2}^{\vee}} \ar[r] & (U/C_1)^{\vee} \ar[d]^{p^{\vee}_1} \\
	(U/C_2)^{\vee} \ar[r]^{p^{\vee}_2} & U^{\vee}.
}
\end{equation}
In other words, $\left(U/\left(C_1 + C_2\right) \right)^{\vee}$ is the fiber product of $p_1^{\vee}:(U/C_1)^{\vee} \rightarrow U^{\vee}$ and $p_2^{\vee}:(U/C_2)^{\vee} \rightarrow U^{\vee}$. 

\begin{lemma}
\label{lemmanosnake}
Assume $C_1 \cap C_2=\{0\}$. The vector space $\left(U/\left(C_1 + C_2\right) \right)^{\vee}$ is isomorphic to $ker(\eta^{\vee})$, where
\begin{equation}
    \label{etadefinition}
\eta:=p_1 \oplus (-p_2): U \longrightarrow U/C_1 \oplus U/C_2.
\end{equation}
\end{lemma}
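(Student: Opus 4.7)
The plan is to unwind the definition of $\eta^{\vee}$ and recognize the kernel as the fiber product that has already been identified with $\left(U/(C_1+C_2)\right)^{\vee}$ in Diagram \ref{diagramma}. Concretely, I would work element-wise: for $(\phi_1,\phi_2)\in (U/C_1)^{\vee}\oplus (U/C_2)^{\vee}$ and $u\in U$, expand
\[
\eta^{\vee}(\phi_1,\phi_2)(u)=(\phi_1,\phi_2)(\eta(u))=\phi_1(p_1(u))-\phi_2(p_2(u)),
\]
so that $\eta^{\vee}(\phi_1,\phi_2)=p_1^{\vee}(\phi_1)-p_2^{\vee}(\phi_2)$. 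Thus $\ker(\eta^{\vee})$ consists exactly of those pairs for which $p_1^{\vee}(\phi_1)=p_2^{\vee}(\phi_2)$, which is the defining condition of the fiber product of $p_1^{\vee}$ and $p_2^{\vee}$.

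Next I would invoke the identifications already made in the paper. Since $p_i\colon U\to U/C_i$ is surjective, the transpose $p_i^{\vee}$ is injective with image $C_i^{\perp}\simeq (U/C_i)^{\vee}$. So the condition $p_1^{\vee}(\phi_1)=p_2^{\vee}(\phi_2)$ forces $\phi_1$ and $\phi_2$ to be identified with the \emph{same} linear form $\psi\in U^{\vee}$ lying in $C_1^{\perp}\cap C_2^{\perp}$. The assignment $(\phi_1,\phi_2)\mapsto \psi$ is clearly linear and bijective onto $C_1^{\perp}\cap C_2^{\perp}$, giving an isomorphism
\[
\ker(\eta^{\vee})\ \xrightarrow{\sim}\ C_1^{\perp}\cap C_2^{\perp}.
\]

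Finally, I would apply equation \eqref{capandcup} together with the standard isomorphism $(C_1+C_2)^{\perp}\simeq \left(U/(C_1+C_2)\right)^{\vee}$ recalled in the Notations, yielding $C_1^{\perp}\cap C_2^{\perp}\simeq \left(U/(C_1+C_2)\right)^{\vee}$. Composing with the previous step finishes the proof.

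There is no real obstacle here; the hypothesis $C_1\cap C_2=\{0\}$ is not used in the isomorphism itself (it is already absorbed into the clean description of the fiber product), but it is consistent with the setup in which $C_1+C_2$ has the expected codimension $i$. The only thing to be careful about is tracking the sign in $\eta=p_1\oplus(-p_2)$, which is precisely what converts the fiber product condition $p_1^{\vee}(\phi_1)=p_2^{\vee}(\phi_2)$ into the kernel condition $\eta^{\vee}(\phi_1,\phi_2)=0$.
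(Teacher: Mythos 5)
Your proof is correct, but it takes a different route from the paper's. The paper works on the primal side: since $C_1\cap C_2=\{0\}$, the map $\eta$ is injective, giving a short exact sequence $0\to U\to U/C_1\oplus U/C_2\to \mathrm{coker}(\eta)\to 0$; dualizing identifies $\ker(\eta^{\vee})$ with $\mathrm{coker}(\eta)^{\vee}$, and the proof is completed by exhibiting an explicit isomorphism $\mathrm{coker}(\eta)\simeq U/(C_1+C_2)$, namely $[([a]_1,[b]_2)]_\eta\mapsto [a+b]_{1,2}$. You instead compute $\eta^{\vee}$ element-wise on the dual side, recognize $\ker(\eta^{\vee})$ as the fiber product $\{(\phi_1,\phi_2): p_1^{\vee}(\phi_1)=p_2^{\vee}(\phi_2)\}$, and use injectivity of the $p_j^{\vee}$ (with images $C_j^{\perp}$) together with \eqref{capandcup} to land in $(C_1+C_2)^{\perp}\simeq \bigl(U/(C_1+C_2)\bigr)^{\vee}$. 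Your argument is more elementary and, as you note, does not actually use $C_1\cap C_2=\{0\}$ for the isomorphism in the statement; that hypothesis enters the paper's proof only to make $\eta$ injective, which in turn guarantees exactness of the dualized sequence on the right (surjectivity of $\eta^{\vee}$), a fact the surrounding discussion (the fiber-product diagram and the dimension count $i$) relies on. What the paper's route buys is the explicit description of $\mathrm{coker}(\eta)$, which ties the lemma to Diagram \ref{diagramma}; what your route buys is a direct, hypothesis-free identification of $\ker(\eta^{\vee})$ itself, with the sign in $\eta=p_1\oplus(-p_2)$ correctly converting the fiber-product condition into the kernel condition.
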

\begin{proof}

Since $C_1 \cap C_2=\{0\}$, $\eta$ is injective and the following exact sequence holds:
$$
0 \rightarrow U \rightarrow U/C_1 \oplus U/C_2 \rightarrow coker(\eta) \rightarrow 0.
$$
If we dualize the short exact sequence above, we have
$$
0 \rightarrow ker(\eta^{\vee}) \rightarrow (U/C_1)^{\vee} \oplus (U/C_2)^{\vee} \rightarrow U^{\vee} \rightarrow 0,
$$
where $\eta^{\vee}=p_1^{\vee}\oplus(- p_2^{\vee}).$ We construct an explicit isomorphism between  $coker(\eta)$ and $U/(C_1 + C_2)$, so that the thesis will follow by duality.  

It is easy to check that an isomorphism 
 $$ \phi: (U/C_1 \oplus U/C_2)/\eta(U) \rightarrow U/(C_1 + C_2)$$ can be defined as follows:
 $$\phi([([a]_1,[b]_2)]_\eta) = [a+b]_{1,2},$$
 where $a,b \in U$, and where $[-]_1,[-]_2,[-]_\eta, [-]_{1,2}$ denote the equivalence classes modulo $C_1, C_2, \eta(U), C_1 + C_2$, respectively.

\end{proof}
\subsubsection{Geometric interpretation}

Let  $\pi_j: {\mathbb P}(U) \dashrightarrow {\mathbb P}(U/C_j)$, the map induced by $p_j$ onto the target space of rays.
From the assumptions on the centers $C_1$ and $C_2$ we have $\mathbb{P}(C_1) \cap \mathbb{P}(C_2)=\emptyset$. We can view ${\mathbb P}(U/(C_1+C_2))$ as the set of rays through the linear span of ${\mathbb P}(C_1)$ and ${\mathbb P}(C_2)$; denote by $\pi^j_{12}: {\mathbb P}(U/C_j) \dashrightarrow {\mathbb P}(U/(C_1+C_2))$ the natural projections, $j=1,2$. Finally, Diagram \ref{primodiagramma} allows us to define $\pi_{12}: \mathbb{P}(U) \dashrightarrow \mathbb{P}(U/(C_1+C_2))$, as $\pi_{12} = \pi^1_{12} \circ \pi_1 =  \pi^2_{12} \circ \pi_2$. As it is standard in computer vision, we call {\it{epipole}} the projective linear space  $\mathbb{P}(E^i_j) = \pi_j(\mathbb{P}(C_i)) \subseteq \mathbb{P}(U/C_j)$. The epipole $\mathbb{P}(E^i_j)$ can be viewed as the center of the projection $\pi^j_{12}$ and can be identified with $\mathbb{P}((C_1+C_2)/C_j),$ $j=1,2.$

As before, one could also choose, for $j=1,2$, projective subspaces $L_j \subset {\mathbb P}(U) $ of dimension $h_j$ such that $L_j \cap C_j = \emptyset$ as screens,  i.e. models of the view spaces embedded in ${\mathbb P}(U) $.  If the screens are in general position, their intersection $L_1 \cap L_2$ is a projective subspace of dimension $i-1$, where $i:=h_1+h_2-k+1$ and one can also interpret the composition $ \pi^1_{12} \circ \pi_1 = \pi^2_{12} \circ \pi_2 $ as the projection of ${\mathbb P}(U)$ onto the intersection $ L_1 \cap L_2$ of the screens. We can also interpret some subspaces in the dual setting: as we said above ${\mathbb P}((U/C_j)^{\vee})$ is the subspace of hyperplanes containing ${\mathbb P}(C_j)$ and similarly ${\mathbb P}((U/(C_1+C_2)^{\vee}) = {\mathbb P}((U/C_1)^{\vee}) \cap {\mathbb P}((U/C_2)^{\vee}) $ is the subspace of hyperplanes containing ${\mathbb P}(C_1)$ and ${\mathbb P}(C_2)$.

Finally, we recall the definition of corresponding rays and corresponding subspaces coming from the setting of Computer Vision. Let  $R_1 \in \mathbb{P}(U/C_1), R_2 \in \mathbb{P}(U/C_2)$ be a pair of rays. We say that $R_1$ and $R_2$ are {\it{corresponding rays}} if their intersection is not empty, as subspaces of $\mathbb{P}(U)$. Let $ \Lambda_j $ be a general linear subspace of $\mathbb{P}(U/C_j)$ of codimension $\alpha_j$, $j=1,2$. We say that $\Lambda_1$ and $\Lambda_2$ are {\it{ corresponding subspaces}} if their intersection is not empty, as subspaces of $\mathbb{P}(U)$.

\begin{example}
	\label{example_P3}
For $k=4$ and $h_1=h_2=2$, we have two linear projections in ${\mathbb P}^3$ from two distinct points ${\mathbb P}(C_1)$ and ${\mathbb P}(C_2)$ onto two distinct planes, which intersect along a line, as $i=2$ in this case. The map $\pi_{12}$ is the linear projection from the line connecting the two points ${\mathbb P}(C_1)$ and ${\mathbb P}(C_2)$. Moreover, the maps $\pi^1_{12}$ and $\pi^2_{12}$ are projections from the epipoles onto the line of intersections of the screens embedded in $3$-dimensional projective space.
\end{example}

\subsubsection{The coordinate framework}\label{twoproj}

Assume we have two projections $\pi_j: {\mathbb P}(U) \dashrightarrow {\mathbb P}(U/C_j)$ for $j=1,2$ and consider the maps 
$\pi^j_{12}: {\mathbb P}(U/C_j) \dashrightarrow {\mathbb P}(U/(C_1+C_2))$ for $j=1,2,$ where $\pi_{12}$ ($= \pi^1_{12} \circ \pi_1 =  \pi^2_{12} \circ \pi_2$ ) is introduced before.

Fix bases $\mathcal{B}$, $\mathcal{B}_1$, $\mathcal{B}_2$ and $\mathcal{B}_{12}$, for $U, U/C_1, U/C_2$ and $U/(C_1+C_2)$ respectively.
Denote by $A$ (resp. $B$) the full rank $(h_1+1) \times  (k+1)$ (resp. $(h_2+1) \times  (k+1)$) representative matrix of $\pi_1$ (resp. $\pi_2$)  with respect to $\mathcal{B} $ and $\mathcal{B}_1$ (resp. $\mathcal{B}$ and $\mathcal{B}_2$). Also, consider full rank representative matrices $P, N_1$ and $N_2$ of $\pi_{12},  \pi^1_{12}$ and $ \pi^2_{12} $ respectively, with the bases chosen above.
By construction, we have $P=N_1A$ and $P=N_2B$.


In what follows, we need to make a natural choice of the bases in order to have a very simple form for the two matrices $A \in Mat(h_1+1, k+1)$ and $B \in Mat(h_2+1, k+1)$ of maximal rank, which canonically represent the projections $\pi_1$ and $\pi_2$. For these purposes, we pick a basis ${\mathcal C}_1:=\{a_1, \ldots, a_{k-h_1}\}$ of $C_1$ and a basis ${\mathcal C}_2:=\{b_1, \ldots, b_{k-h_2}\}$ of $C_2$.
Since $C_1$ and $C_2$ have zero intersection, the union of these two bases give a basis ${\mathcal C}$ of the sum $C_1+C_2$. Complete ${\mathcal C}$ to a basis ${\mathcal B}:=\{ u_1, \ldots, u_i ,  a_1, \ldots, a_{k-h_1},  b_1, \ldots, b_{k-h_2}\}$ of $U$, where $u_j \notin C_1+C_2$. As for $U/C_1$, we choose the basis ${\mathcal B}_1:=\{[u_1]_1, \ldots,[u_i]_1, [b_1]_{1}, \ldots, [b_{k-h_2}]_{1}\}$,  where $[-]_1$ denotes the equivalence class modulo $C_1$. Analogously for $U/C_2$, we choose the basis ${\mathcal B}_2:=\{[u_1]_2, \ldots,[u_i]_2, [a_1]_{2}, \ldots, [a_{k-h_2}]_{2}\}$, where $[-]_2$ denotes the equivalence class modulo $C_2$. With this choice, the matrices associated with $\pi_1$ and $\pi_2$ are given by
$$
\tilde{A}=\left(
\begin{array}{ccc}
I_i & 0_{i,k-h_2} & 0_{i,k-h_1} \\
0_{k-h_2, i} & I_{k-h_2} & 0_{k-h_2
	, k-h_1}
\end{array}
\right),
$$
$$
\tilde{B}=\left(
\begin{array}{ccc}
I_i & 0_{i,k-h_2} & 0_{i,k-h_1} \\
0_{k-h_1, i} & 0_{k-h_1
	, k-h_2} & I_{k-h_1}
\end{array}
\right),
$$
where $I_t$ is the $t \times t$ identity matrix and $0_{a,b}$ is the zero matrix with $a$ rows and $b$ columns. By definition, the epipole $E^2_1$ in $U/C_1$ (resp. the epipole $E^1_2$ in $U/C_2$) is generated by the vectors $[b_1]_{1}, \ldots, [b_{k-h_2}]_{1}$ (resp. $[a_1]_{2}, \ldots, [a_{k-h_1}]_{2}$). The matrix associated with $\pi^1_{12}$ has $i$ rows and $h_1+1$ columns; the matrix associated with $\pi^2_{12}$ has $i$ rows and $h_2+1$ columns. If we choose the bases ${\mathcal B}_1$, ${\mathcal B}_2$ and $\mathcal{B}_{12} =\{[u_1]_{12}, \ldots, [u_i]_{12}\}$, where $[-]_{12}$ denotes the equivalence classes modulo $C_1+C_2$, the matrices corresponding to $\pi^1_{12}, \pi^2_{12}$ and $\pi_{12}$ are given by
\begin{equation}
    \label{tauci}
\tilde{N}_1=(I_i \, \, 0_{i,k-h_2} ), \qquad \tilde{N}_2=( I_i \, \,0_{i,k-h_1}), \qquad \tilde{P}=(I_i \, \, 0_{i,k+1-i} ).
\end{equation}


\subsection{Polarity with respect to the quadric $A^TA$}
\label{polarity}

Two symmetric matrices are naturally associated with a projection matrix $A$, that is, the matrix $AA^T$ of size $h+1$ and the matrix $A^TA$ of size $k+1$. Both have rank $h+1$ so the former defines a non-singular quadric in the ray space ${\mathbb P}(U/C)$; the latter quadric $Q_A$ lies in ${\mathbb P}(U)$ and has vertex the center of the camera ${\mathbb P}(C)$. The polarity defined by the quadric $Q_A$ induces an explicit isomorphism $\psi_A$ between $\mathbb{P}(U/C)$ and $\mathbb{P}(C^{\perp})$, which associates a ray with its polar hyperplane with respect to the quadric $Q_A$, which passes through the vertex ${\mathbb P}(C)$. If we fix a basis in $U$, thus introducing homogeneous coordinates $[X]$ in projective space ${\mathbb P}(U)$, the quadric $Q_A$ is the set of points $[X] \in {\mathbb P}(U)$ such that $X^TA^TAX=0$. Thus, setting $\psi_A([AX])$ the hyperplane with dual coordinates $A^TAX$, we get a well defined bijective map. As recalled before, the projective space $\mathbb{P}(C^{\perp})$ is isomorphic to ${\mathbb P}((U/C)^{\vee})$. Therefore, the polarity with respect to $Q_A$ gives a canonical map between a ray and the corresponding polar hyperplane. Thus, we give an explicit geometric interpretation of the isomorphism between the ray space and the view space, and we describe - from a more explicit viewpoint - the map associated with a projection matrix introduced by A. Aholt and L. Oeding \cite{oe1}, \cite{oe2}.

\

In the case of two projection matrices, we deal with $3$ quadrics, $Q_A, Q_B$ and $Q_P$ in ${\mathbb P}(U)$. They correspond to the symmetric matrices $A^TA$, $B^TB$ and $P^TP$, respectively. The quadrics $Q_A$ and $Q_B$ are quadric cones with vertices ${\mathbb P}(C_1)$ and ${\mathbb P}(C_2)$; the vertex of the quadric $Q_P$ is the span of the centers ${\mathbb P}(C_1)$ and ${\mathbb P}(C_2)$. Up to projective transformations in ${\mathbb P}(U)$, we can choose $P$ to be an $i \times (k+1)$ given as $P=(T|0)$, where $T$ is an $i \times i $ invertible matrix and $0$ is the zero matrix with $i$ rows and $k+1-i$ columns. The intersection of $Q_A$ (resp. $Q_B$) with the projection screen $L_1$ (resp. $L_2$) is a non-singular quadric $\Gamma_A$ (resp. $\Gamma_B$). Generically, the two screens intersect along an $(i-1)$-dimensional space $L_{12}$, which can be taken as the screen of the projection with associated matrix $P$. The intersection of $Q_P$ with $L_{12}$ is a rank $i$ quadric $Q_{12}$ in $L_{12}$; hence it is non-singular if $i \geq 3$ (the case $i=2$ is shown below in a specific example). As mentioned before, $Q_{12}$ can also be obtained as the quadric associated with the projection of $\Gamma_A$ (resp. $\Gamma_B$) onto $L_{12}$ from $\mathbb{P}(E^2_1)$ (resp. $\mathbb{P}(E^1_2)$).

\
\begin{example}
Let us go back to Example \ref{example_P3}. The quadrics $Q_A$ and $Q_B$ are two cones with vertices the centers of projections. Without loss of generality, assume $C_1=(0:0:0:1)$ and $C_2=(0:0:1:0)$. Up to projective transformations in ${\mathbb P}^3$, we can assume $A=(I_3|0)$, where $0$ is a $3 \times 1$ zero column. The matrix $B$ can be written as $(M|n)$ where $M$ is a $3 \times 3$ matrix and $n$ is a $3 \times 1$ column vector with entries $n_{14}, n_{24}, n_{34}$. Moreover, the third column of $M$ has to be the zero column because of the choice of $C_2$. In this case a natural, not unique, choice of the matrix $P$ is the $2 \times 4$ matrix given by $(T|0_2)$ where $T$ is a $2 \times 2$ invertible matrix and $0_2$ is the $2 \times 2$ matrix of zeros. As a consequence, the equations of $Q_A$ and $Q_B$ are $x_0^2+x_1^2+x_2^2=0$ and $X^TB^TBX=0$, where $[X]$ are homogeneous coordinates in ${\mathbb P}^3$. 

The intersection of $Q_A$ (resp. $Q_B$) with the screen of projections is a non-singular conic. In the case of $C_1$, we can choose $x_3=0$ as a projection screen, so the image of $Q_A$ is the conic $x_0^2+x_1^2+x_2^2=0$, which is non-singular in the plane $x_3=0$.  In the case of $C_2$, we can choose $x_2=0$ as a projection screen, and the image of $Q_B$ is the non singular conic $X^TB^TBX=0, x_2=0$. The epipole $\mathbb{P}(E_1^2)$ is the point $(0:0:1:0)$ while the epipole $\mathbb{P}(E_2^1)$ is the point $(n_{14}: n_{24}: 0: n_{34})$. The line ${\mathbb P}(C_1+C_2)$ has equation $x_0=x_1=0$ and the line $l$ of equation $x_2=x_3=0$ can be chosen as a screen for the projection from ${\mathbb P}(C_1+C_2)$. The projection of the conic $x_0^2+x_1^2+x_2^2=0, x_3=0$ from $\mathbb{P}(E_1^2)$ onto the line $l$ gives two points $V_1^1$ and $V_1^2$. For a generic choice of $n_{14}, n_{24}, n_{34}$ the projection of the conic $X^TB^TBX=0, x_2=0$ from the epipole ${\mathbb P}(E_2^1)$ gives two points $U_2^1$ and $U_2^2$ on $l$.  The pairs of points $V_1^1, V_1^2$, and $U_2^1,U_2^2$ are the same. Indeed the quadric with vertex ${\mathbb P}(C_1+C_2)$ is given by $(t_{11}^2+t_{21}^2)x_0^2+(t_{12}^2+t_{22}^2)x_1^2 + (t_{11}t_{12}+t_{21}t_{22})x_0x_1=0$, where $T=(t_{ij})$ is the matrix above. It has two irreducible components that are planes through ${\mathbb P}(C_1+C_2)$. Generically, the two components intersect the line $x_2=x_3=0$ in two sets of distinct points, $\{V_1^1, V_1^2\}$ and  $\{U_2^1,U_2^2\}$, which coincide due to the commutativity of Diagram \ref{primodiagramma}.

\end{example}

\subsection{A group action on the space of rays and the space of views}
\label{actiongroups}
Coming back to the case of one projection, the general linear group $GL(k+1)$ acts on $U$ on the left. Precisely, pick a basis ${\mathcal B}$ in $U$, any $(k+1) \times (k+1)$ invertible matrix $M$ induces an automorphism $L_M$ of $U$ such that a vector $u \in U$ is mapped to $Mu$. Let us consider the stabilizer ${\mathcal S}_C$ of $C$ in $GL(k+1)$. Fix the basis ${\mathcal B}:=\{a_1, \ldots, a_{k-h},  u_1, \ldots, u_{h+1}\}$ in $U$, which is obtained by fixing a basis ${\mathcal C}:=\{a_1, \ldots, a_{k-h}\}$ of $C$ and completing it to a basis of $U$. Then a matrix of ${\mathcal S}_C$ is a block matrix of the following form:
$$
\left(
\begin{array}{cc}
D_1 & T \\
0 & D_2
\end{array}
\right)
$$
where $D_1 \in GL(k-h)$ and $D_2 \in GL(h+1)$. Let us consider $U/C$, with the induced basis ${\mathcal B'}:=\{[u_1], \ldots,[u_{h+1}]\}$  where, as in the previous sections, $[-]$ denotes the equivalence class modulo $C$. If $M \in {\mathcal S}_C$, there exists a commutative diagram
$$
\begin{array}{ccc}
U & \overset{M}{\rightarrow}  & U \\
A \, \downarrow & & \downarrow \, A\\
U/C & \overset{N_M}{\rightarrow} & U/C
\end{array}
$$
such that $AM=N_MA$. As remarked above, the rows of $A$ are linearly independent, so there exists a pseudo-inverse $A^{\dagger}$ such that $AA^{\dagger}=I$, where $I$ is the identity matrix of size $(h+1)$. As a consequence, we can take $N_M$ as $AMA^{\dagger}$. 

Therefore, the stabilizer ${\mathcal S}_C$ induces a left action on $U/C$. Indeed, for $[r] \in U/C$ there exists $u \in U$ such that $[r]=[Au]$. Then $N_M([r])=(AMA^{\dagger})([r]):=[A(Mu)]$. It is an exercise to verify that this action is well defined. Accordingly, the left action of $PGL(k+1)$ on ${\mathbb P}(U)$ induces a left action of the image of ${\mathcal S}_C$ in $PGL(k+1)$ on the space of rays ${\mathbb P}(U/C)$.

Now, let us start from $U/C$, with the basis fixed before. A matrix $N \in GL(h+1)$ acts on the left on $U/C$. Since a linear map preserves the zero vector, there exists a matrix $M_N \in {\mathcal S}_C$ such that the following diagram commutes:
$$
\begin{array}{ccc}
U & \overset{M_N}{\rightarrow}  & U \\
A \, \downarrow & & \downarrow \, A\\
U/C & \overset{N}{\rightarrow} & U/C
\end{array}
$$
where $M_N=A^{\dagger}NA$ is a matrix in ${\mathcal S}_C$. Therefore we have $N([r])=N([Au])=[A(M_Nu)]$ for $r$ and $u$ such that $[Au]=[r]$. If we consider the transpose maps of the diagram above, we get the natural actions induced by ${M_N}^{T}$ on the dual space $U^{\vee}$ and by ${N}^{T}$ on the space of views $(U/C)^{\vee}$, where ${A}^{T} {M_N}^{T} ={N}^{T} {A}^{T}.$

Finally, any matrix $N \in GL(h+1)$ inducing a linear transformation on the space of rays $U/C$, yields a transformation on the wedge spaces $\bigwedge^j(U/C)$ and $\bigwedge^j(U/C)^{\vee}$: the former is given by the matrix $\Lambda^j N$ and the latter is given by $\bigwedge^jN^T$.

\section{Bifocal Grassmann tensors} \label{grasstens}
 We recall here the basic elements of the
construction of {\it Grassmann tensors} (\cite{Hart-Schaf}), in the case of our interest, i.e. for two projections.

\
Let us consider a pair of projections
$\pi_{j}:  {\mathbb P}(U) \dashrightarrow {\mathbb P}(U/C_j)$ for $j=1,2$, fix a {\it profile}  $(\alpha_1, \alpha_2)$ and choose bases for $U$ and $U/C_j$. Let $\{\mathcal{S}_j\}$ for $j=1,2,$ where $\mathcal{S}_j
\subset {\mathbb P}(U/C_j)$ be a set of general $s_j$-dimensional spaces, with
$s_j=h_j-\alpha_j$. Let $S_j$ be the matrix of size $(h_j+1)\times (s_j+1)$ of maximal rank
 whose columns are a basis for
$\mathcal{S}_j$. By definition, if all the $\mathcal{S}_j$ are
corresponding subspaces there exists a point $\mathbf{X} \in
{\mathbb P}(U)$ such that $\pi_{j}(\mathbf{X})\in \mathcal{S}_j$ for
$j=1,2.$ In other words, there exist $2$ vectors
$\mathbf{v_j} \in \mathbb{C}^{s_j+1}$ $j = 1,2,$ such that
\begin{equation}
\label{grasssystem}
\begin{bmatrix}
P_1 & S_1 & 0   \\
P_2 & 0 & S_2 \\
\end{bmatrix}%
\cdot
\begin{bmatrix}
\mathbf{X}\\
\mathbf{v_1} \\
\mathbf{v_2} \\
\end{bmatrix}
=
\begin{bmatrix}
0 \\
0 \\
\end{bmatrix}.
\end{equation}

The existence of a non-trivial solution
$\{\mathbf{X},\mathbf{v_1},\mathbf{v_2}\}$ of the linear system
(\ref{grasssystem}) implies that the system matrix has zero
determinant. This determinant can be thought of as a bilinear
form, i.e. a tensor, in the Pl\"{u}cker coordinates of the spaces
$\mathcal{S}_j.$ This tensor is called the {\it bifocal Grassmann tensor} $\T,$ and
$\T \in V_1 \tensor V_2$ where $V_j = \bigwedge^{s_j+1}(U/C_j)$ is the
$\binom{h_j+1}{s_j + 1}$-dimensional vector space such that $G(s_j+1, h_j+1) \subset \mathbb{P}(V_j).$
More explicitly, the entries of the Grassmann tensor are some of
the Pl\"{u}cker coordinates of a point in the Grassmannian $G(k+1,U/C_1 \oplus U/C_2 )$, i.e. of the matrix
\begin{equation}
\left[
\begin{array}{c|c}
\label{trasposta_r}
A^T & B^T \\
\end{array}
\right],
\end{equation}
up to sign. More specifically, they are the maximal minors of the matrix
(\ref{trasposta_r}) obtained by selecting $\alpha_1$ columns
from the matrix $A^T$ and $\alpha_2$ columns from the matrix $B^T$.

\begin{remark}In what follows, we give a more abstract description of Grassmann tensors. For these purposes, recall first the Hodge operator. Let $V$ be an $n$-dimensional vector space. Pick $\{b_1, \ldots, b_n\}$ a basis of $V$ such that $1 \in {\mathbb C}$ corresponds to the vector $b_1 \wedge \ldots \wedge b_n \in \bigwedge^n V \simeq {\mathbb C}$. Recall that the Hodge operator is a linear map $*: \bigwedge^k V \to \bigwedge^{n-k} V$ defined as follows. Let $I:=\{i_1 < \ldots  < i_k\}$ be a multi-index and denote by $J:=\{j_1 < \ldots <j_{n-k}\}$ the complementary multi-index in $\{1, \ldots, n\}$. Then we have $*(b_I):=(-1)^{\sigma(I,J)}b_J$, where $b_I:=b_{i_1} \wedge \ldots \wedge b_{i_k}$, where $\sigma(I,J)$ is $+1$ or $-1$ according to the parity of the permutation $(I,J).$

\end{remark}
The subspaces ${\mathcal S}_j$ in $(U/C_j)$ may be viewed as elements of the wedge powers of the direct sum $(U/C_1) \oplus (U/C_2)$. Therefore for any profile $(\alpha_1, \alpha_2)$ we have
\begin{equation}
\bigwedge^{k+1}\left( (U/C_1)\oplus (U/C_2)\right)= \bigoplus_{\alpha_1, \alpha_2}\left( \bigwedge^{\alpha_1} \left( U/C_1\right)\otimes \bigwedge^{\alpha_2}\left(U/C_2\right)\right).
\label{sommawedge}
\end{equation}

Moreover, by the isomorphisms induced by the Hodge operator, we have
\begin{eqnarray}
\label{fundwedge}
\bigwedge^{\alpha_1} \left( U/C_1\right)\otimes \bigwedge^{\alpha_2}\left(U/C_2\right) &\simeq& \bigwedge^{s_1+1} \left(U/C_1\right)^{\vee} \otimes \bigwedge^{s_2+1}\left(U/C_2\right)^{\vee} \\ &=& Hom\left(\bigwedge^{s_1+1}\left(U/C_1\right), \bigwedge^{s_2+1}\left(U/C_2\right)^{\vee} \right).
\end{eqnarray}

Therefore, any Grassmann tensor can be viewed as a linear map, thus yielding a matrix ${\mathfrak F}$ which is called a {\em generalized fundamental matrix} of size $\binom{h_2+1}{h_2-\alpha_2+1} \times \binom{h_1+1}{h_1-\alpha_1+1}$. The entries of ${\mathfrak F}$ can be described explicitly. Let $I=\{i_1 < \dots < i_{s_1+1}\},$ $J=\{j_1 < \dots < j_{s_2+1}\}$ be two multi-indices in $\{1, \ldots, h_1+1\}$ abd $\{1, \ldots, h_2+1\}$, respectively. Denote by $I^c, J^c$ the (ordered) sets of
complementary indices.  Moreover,
denote by $A_I$ and $B_J$ the matrices obtained from $A^T$ and
$B^T$ by deleting the columns corresponding to the indices $i_1, \dots, i_{s_1+1}$ and $j_1, \dots,
j_{s_2+1},$ respectively. Then the entries of $\mathfrak{F}$ are given by
$F_{I,J}=\epsilon(I,J) \det
\begin{bmatrix}
A_I &  B_J \\
\end{bmatrix}$ where $\epsilon(I,J)$ is $+1$ or $-1$ according to the parity of
the permutation $(I,J, I^c,J^c),$ with lexicographical
order of the multi-indices $\{I\}$ for the rows and $\{J\}$
for the columns. In \cite{tubbAMPA} and \cite{BBBT1}, the authors proved the following result:
\begin{theorem}
\label{genfundmatteo} Let us consider two projections of maximal rank with profile ($\alpha_1$, $\alpha_2$). Moreover, assume the intersection of the centers is empty. Then the rank of the corresponding bifocal Grassmann tensor
$\mathfrak{F}$ is given by 
$$\rk{\mathfrak{F}} =\binom{(h_1-\alpha_1+1)+(h_2-\alpha_2+1)}{h_1-\alpha_1+1}.$$
\end{theorem}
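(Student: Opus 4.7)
The plan is to exploit the $GL(k+1)\times GL(h_1+1)\times GL(h_2+1)$ action on the pair $(A,B)$ described in Subsection~\ref{actiongroups}. A change of basis on $U$ acts by a common right $GL(k+1)$ multiplication, while the groups $GL(h_j+1)$ act on each projection by left multiplication; both actions induce invertible transformations on rows and columns of $\mathfrak{F}$ (up to a global scalar), so $\rk{\mathfrak{F}}$ is invariant. I may therefore compute the rank on the canonical pair $(\tilde A,\tilde B)$ produced in Subsection~\ref{twoproj} with respect to the basis $\mathcal{B}=\{u_1,\dots,u_i,a_1,\dots,a_{k-h_1},b_1,\dots,b_{k-h_2}\}$ adapted to $C_1$, $C_2$ and a complement.

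Setting $i:=h_1+h_2-k+1$, the hypothesis $\alpha_1+\alpha_2=k+1$ together with $s_j=h_j-\alpha_j$ yields the crucial identity $i=s_1+s_2+2$. The concatenated $(k+1)\times(h_1+h_2+2)$ matrix $[\tilde A^T\,|\,\tilde B^T]$ has a very sparse block shape: the top $i$ rows carry two copies of $I_i$, one in the first $i$ columns of $\tilde A^T$ and one in the first $i$ columns of $\tilde B^T$; the middle $k-h_2$ rows only touch the block $I_{k-h_2}$ sitting in $\tilde A^T$; the bottom $k-h_1$ rows only touch the block $I_{k-h_1}$ sitting in $\tilde B^T$. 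The central step is to read off which maximal minors of this matrix are non-zero. Since the only non-zero entries in the middle (resp.\ bottom) rows lie in specific $k-h_2$ (resp.\ $k-h_1$) columns, any non-vanishing minor is forced to retain all of them. Writing $I\subseteq\{1,\dots,h_1+1\}$ and $J\subseteq\{1,\dots,h_2+1\}$ for the index sets deleted from each factor (so $|I|=s_1+1$ and $|J|=s_2+1$), this forces $I,J\subseteq\{1,\dots,i\}$. The leftover top $i\times i$ block is then the juxtaposition of the standard basis columns indexed by $\{1,\dots,i\}\setminus I$ and $\{1,\dots,i\}\setminus J$, whose determinant is non-zero exactly when these two sets partition $\{1,\dots,i\}$, equivalently $J=\{1,\dots,i\}\setminus I$, and in that case the minor equals $\pm 1$.

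After a suitable reordering of rows and columns, $\mathfrak{F}$ therefore reduces to a single non-zero $\binom{i}{s_2+1}\times\binom{i}{s_1+1}$ block supported on the complementation bijection $I\mapsto\{1,\dots,i\}\setminus I$ between $(s_1+1)$- and $(s_2+1)$-subsets of $\{1,\dots,i\}$; on this block $\mathfrak{F}$ is (up to a diagonal of signs) a permutation matrix. Hence $\rk{\mathfrak{F}}=\binom{i}{s_1+1}=\binom{s_1+s_2+2}{s_1+1}=\binom{(h_1-\alpha_1+1)+(h_2-\alpha_2+1)}{h_1-\alpha_1+1}$, which is the asserted formula. The only genuine obstacle along the way is the combinatorial bookkeeping that pins down the sparsity pattern of $\mathfrak{F}$ in the canonical frame; once the constraints $I,J\subseteq\{1,\dots,i\}$ and $J=\{1,\dots,i\}\setminus I$ are established, the rank count is immediate.
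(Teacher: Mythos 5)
Your proposal is correct: the rank-invariance you invoke is exactly the paper's relations \eqref{arising} and \eqref{fundcan}, and your minor-by-minor analysis of the canonical pair $(\tilde A,\tilde B)$ recovers precisely the sparse, partial-permutation structure of $\mathfrak{F}_c$ (nonzero entries $\pm 1$ only for $I,J\subseteq\{1,\dots,i\}$ with $J=\{1,\dots,i\}\setminus I$) that underlies Proposition \ref{decomp_canon_prop} and is visible in Example \ref{examplep4p3}. Note that the paper itself does not reprove Theorem \ref{genfundmatteo} but cites \cite{tubbAMPA} and \cite{BBBT1}; your argument is essentially the same canonical-form computation used there and in the paper's decomposition machinery, so no new ideas are needed beyond the bookkeeping you carried out.
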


\subsection{An action on the set of projection matrices} 
\label{action}
In what follows, fix a $(k+1)$-dimensional vector space $U$. Define ${\mathfrak P}$ to be the vector space of all pairs of matrices $(M_1,M_2)$, where $M_i$ is a matrix of size $(k+1) \times (h_j+1)$ for $j=1,2$. It contains an open set ${\mathfrak W}$ of pairs of matrices $(M_1, M_2)$ such that $M_j$ has maximal rank $h_j+1$. Naturally, it can be identified with an open set in
$$
{\mathbb A}^{(k+1)(h_1+h_2+2)} \simeq {\mathbb A}^{(k+1)(h_1+1)} \times {\mathbb A}^{(k+1)(h_2+1)} \simeq {\mathfrak P}.
$$

\begin{lemma}
\label{voidintersection}
Assume $k \geq h_j+1, j=1,2$ and $k \leq h_1+h_2+1$. The matrix $[M_1|M_2]$ of size $(k+1) \times (h_1+h_2+2)$ has rank $k+1$ if and only if $C_1 \cap C_2= \{0\}$, where $C_j$ is the null-space $Ker(M^T_j)$ for $j=1,2$.
\end{lemma}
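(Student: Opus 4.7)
The plan is to translate the rank condition on $[M_1|M_2]$ into a condition on the column spaces of $M_1$ and $M_2$, and then dualize using the standard identity $(W_1+W_2)^{\perp}=W_1^{\perp}\cap W_2^{\perp}$. The whole lemma is essentially the duality between the column space of a matrix and the kernel of its transpose, packaged with a quick dimension check justifying the standing hypotheses.

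First I would observe that, since $M_j$ has maximal rank $h_j+1$, its image $\mathrm{Im}(M_j)\subseteq U$ is a subspace of dimension $h_j+1$, and the subspace $C_j=\ker(M_j^T)$ is precisely its annihilator in $U$ (identifying $U$ with its bidual via the chosen basis). Hence $\dim C_j=(k+1)-(h_j+1)=k-h_j$, which is a strictly positive number by the hypothesis $k\ge h_j+1$, so the statement $C_1\cap C_2=\{0\}$ is not automatically true.

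Next I would note that the column space of the block matrix $[M_1|M_2]$ is exactly $\mathrm{Im}(M_1)+\mathrm{Im}(M_2)\subseteq U$, so
\[
\mathrm{rank}\,[M_1|M_2]=\dim\bigl(\mathrm{Im}(M_1)+\mathrm{Im}(M_2)\bigr).
\]
Since $\dim U=k+1$, this rank equals $k+1$ if and only if $\mathrm{Im}(M_1)+\mathrm{Im}(M_2)=U$. Passing to annihilators,
\[
\bigl(\mathrm{Im}(M_1)+\mathrm{Im}(M_2)\bigr)^{\perp}=\mathrm{Im}(M_1)^{\perp}\cap\mathrm{Im}(M_2)^{\perp}=C_1\cap C_2,
\]
so $\mathrm{Im}(M_1)+\mathrm{Im}(M_2)=U$ is equivalent to $C_1\cap C_2=\{0\}$. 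This is the claimed equivalence.

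Finally, I would close by remarking that the two standing inequalities are exactly the conditions needed for both sides of the equivalence to be non-vacuous. As noted above, $k\ge h_j+1$ ensures $C_j\ne\{0\}$. The bound $k\le h_1+h_2+1$, via Grassmann's formula applied either to $\mathrm{Im}(M_1)+\mathrm{Im}(M_2)$ or dually to $C_1+C_2$, is precisely what allows $\mathrm{Im}(M_1)+\mathrm{Im}(M_2)$ to span $U$, equivalently $C_1\cap C_2$ to be zero. There is no substantive obstacle: the argument is essentially one line of linear algebra once the duality $C_j=\mathrm{Im}(M_j)^{\perp}$ is made explicit.
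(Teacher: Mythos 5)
Your proof is correct, and it takes the dual route to the one in the paper. The paper works with the nullspace of $[M_1|M_2]$: since each $M_j$ has maximal rank, the nullspace is identified with $\mathrm{Im}(M_1)\cap\mathrm{Im}(M_2)\simeq C_1^{\perp}\cap C_2^{\perp}=(C_1+C_2)^{\perp}$, and the rank condition becomes a condition on $\dim(C_1+C_2)$, which is then converted into $C_1\cap C_2=\{0\}$ via Grassmann's formula. You instead work with the column space: $\mathrm{rank}\,[M_1|M_2]=k+1$ iff $\mathrm{Im}(M_1)+\mathrm{Im}(M_2)=U$, and the identity $\bigl(\mathrm{Im}(M_1)+\mathrm{Im}(M_2)\bigr)^{\perp}=C_1\cap C_2$ gives the equivalence in one step, with no appeal to Grassmann's formula and, in fact, no use of the maximal-rank hypothesis on the individual $M_j$ (which the paper's kernel identification does need); for you that hypothesis and the numerical bounds only serve, as you note, to keep the statement non-vacuous. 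The two arguments are the same duality seen from opposite sides, but yours is marginally more direct and slightly more general. One small imprecision: $C_j=\ker(M_j^T)$ is the annihilator of $\mathrm{Im}(M_j)$ under the identification of $U$ with its \emph{dual} $U^{\vee}$ furnished by the chosen basis (equivalently, the orthogonal complement with respect to the standard nondegenerate bilinear form $x^Ty$), not with its bidual; the paper commits the same abuse implicitly, and it does not affect the argument.
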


\begin{proof} Choose $M_1$ and $M_2$ as above so $\dim(C_j)=k-h_j$. The matrix $[M_1|M_2]$ has rank $k+1$ if and only if its nullspace $N$ has dimension $i:=h_1+h_2+1-k$. On the other hand, $N$ is isomorphic to 
\begin{eqnarray*}
Im(M_1) \cap Im(M_2) &\simeq& Ker(M_1^T)^{\perp}\cap Ker(M_2^T)^{\perp} \\ &\simeq& \left(Ker(M_1^T)+ Ker(M_2^T) \right)^{\perp} \\ &=& (C_1+C_2)^{\perp} \simeq \left( U/(C_1+C_2)\right)^{\vee}.
\end{eqnarray*}

Therefore the matrix $[M_1|M_2]$ has rank $k+1$ if and only if $i= k+1-\dim(C_1+C_2)$, i.e., $\dim(C_1 \cap C_2)=0$ by Grassmann's formula.
\end{proof}

\begin{remark}
\label{hart1}
Let $A$ and $B$ two projection matrices of size $(h_j+1) \times (k+1)$ for $j=1,2$. If we set $M_1=A^T$ and $M_2=B^T$, the matrix $[M_1|M_2]$ gives a point in $G_1=G(k+1,h_1+h_2+2)$. By choosing suitable bases in $U$, $U/C_1$ and $U/C_2$, the matrices $M_1$ and $M_2$ correspond to the linear maps $p_1$ and $p_2$ in Diagram \ref{primodiagramma}.
\end{remark}

Now, let ${\mathfrak W}_0 \subseteq {\mathfrak W}$ be the subset of matrices $[M_1|M_2]$ such that $C_1 \cap C_2 =\{0\}$. There is a left action of $GL(k+1)$ on ${\mathfrak W}_0$, as well as a right action of $GL(h_1+1) \times GL(h_2+1)$ on ${\mathfrak W}_0$, namely:
$$
\begin{array}{ccc}
GL(k+1) \times {\mathfrak W}_0 \times (GL(h_1+1) \times GL(h_2+1))
 &\longrightarrow & {\mathfrak W}_0 \\ & & \\
\left(G,\left[M_1|M_2\right],\left[%
\begin{array}{c|c}
H_1&  \mathbf{0} \\
\hline 
 \mathbf{0} & H_2 \\
\end{array}%
\right] \right) & \longrightarrow & \left[G\,M_1\,H_1 | G\,M_2\,H_2\right],
\end{array}
$$
where ${\mathbf 0}$ is the zero matrix. Let us describe this action more explicitly. For $j=1,2$ let $L_{M_j} = <{M_j}^1, \dots , {M_j}^{h_j+1}>$ be the vector space of dimension $h_j+1$, which is spanned by the columns of $M_j$. Moreover, set $\Lambda_{M_j}=\mathbb{P}(L_{M_j})$. Then, with the same notation as before, the dimension of $I_{M_1,M_2}:= L_{M_1} \cap L_{M_2}$ is equal to $i=h_1+h_2-k+1 > 0$. Moreover, we choose bases
$\{v_1, \dots, v_i, w_{i+1}, \dots, w_{h_1+1}\}$  for  $L_{M_1}$ and 
$\{v_1, \dots, v_i, w'_{i+1}, \dots, w'_{h_2+1}\}$ for  $L_{M_2}$
such that $\{v_1, \dots, v_i\}$ is a basis for $I_{M_1,M_2}$. As a consequence, there exist matrices $K_1 \in GL(h_1+1)$ and $K_2 \in GL(h_2+1)$ such that 
\begin{equation}
\left[%
\begin{array}{c|c}
\label{precanonica}
M_1 & M_2 \\
\end{array}%
\right] \left[%
\begin{array}{c|c}
K_1 &  \mathbf{0} \\
\hline 
 \mathbf{0} & K_2 \\
\end{array}%
\right] = 
\end{equation}
$$
=\left[%
\begin{array}{c|c}
v_1, \dots, v_i, w_{i+1}, \dots, w_{h_1+1} & v_1, \dots, v_i, w'_{i+1}, \dots, w'_{h_2+1} \\
\end{array}
\right].
$$

Under our assumptions, $\{v_1, \dots, v_i, w_{i+1}, \dots,
w_{h_1+1}, w'_{i+1}, \dots, w'_{h_2+1}\}$ is a basis of
$U^{\vee}$, so there exists $G \in PGL(k+1)$ such that
$$G \left[%
\begin{array}{c}
v_1, \dots, v_i, w_{i+1}, \dots, w_{h_1+1} ,  w'_{i+1}, \dots, w'_{h_2+1} 
\end{array}%
\right] = \left[%
\begin{array}{c}
e_1, \dots, e_{k+1} 
\end{array}%
\right],$$ 
where $\{e_1, \dots,
e_{k+1}\}$ is the canonical basis of $\check{\mathbb{C}}^{k+1}$. This implies that
$$G \left[%
\begin{array}{c|c}
\label{canonica}
v_1, \dots, v_i, w_{i+1}, \dots, w_{h_1+1} & v_1, \dots, v_i, w'_{i+1}, \dots, w'_{h_2+1} \\
\end{array}%
\right] = $$
\begin{equation}
\label{canonicalform}
\left[%
\begin{array}{cc|cc}
 I_i & \mathbf{0} & I_i & \mathbf{0} \\
  \mathbf{0} & I_{h_1+1-i} & \mathbf{0} & \mathbf{0} \\
  \mathbf{0} & \mathbf{0} & \mathbf{0} & I_{h_2+1-i} \\
\end{array}%
\right],
\end{equation}
where $I_a$ denotes the $a \times a$ identity matrix. The matrix in \eqref{canonicalform} is called  {\em the canonical form for matrices $[M_1|M_2] \in {\mathfrak W}_0$}. 

Finally, if we look at the Grassmanniann $G_1=G(k+1,h_1+h_2+2)$ as a quotient by the action of $PGL(k+1)$ of rank $(k+1)$ matrices of size $(k+1) \times (h_1+h_2+2)$, the image of ${\mathfrak W}_0$ under the corresponding quotient map is an open subset, which is denoted by ${\mathfrak Q}_0$. By duality, the Grassmanniann $G_1=G(k+1,h_1+h_2+2)$ is isomorphic to the Grassmanniann $G_2=G(i, h_1+h_2+2)$. As a consequence, the image ${\mathfrak G}_0$ of ${\mathfrak Q}_0$ under this isomorphism is an open set in $G_2$. An element of it can be described by means of a matrix $[\tau_1|\tau_2]^T$ where $\tau_j^T$ has size $(h_j+1) \times i$ and maximal rank $i$.  Thus, there is an action of the group $GL(h_1+1) \times GL(h_2+1) \times GL(i)$ on set ${\mathfrak Q}_0$ of such matrices $[\tau_1|\tau_2]^T$, namely:
\begin{equation}
\begin{array}{ccc}
GL(h_1+1) \times GL(h_2+1) \times {\mathfrak Q}_0 \times GL(i)
 &\longrightarrow & {\mathfrak Q}_0 \\ & & \\
\left( \left[\begin{array}{c|c}
\Delta_1&  \mathbf{0} \\
\hline 
 \mathbf{0} & \Delta_2 \\
\end{array}%
\right], 
\begin{array}{c}
[\tau_1|\tau_2]^T
\end{array} 
, \Gamma \right)
 & \longrightarrow & 
 \begin{array}{c}
\left[(\Delta_1 \tau_1^T \Gamma), (\Delta_2 \tau_2^T \Gamma) \right]^T
 \end{array},
\end{array}
\label{tauaction}
\end{equation}
where ${\mathbf 0}$ is the zero matrix.

\begin{remark}
As explained in \eqref{fundwedge}, after choosing suitable bases, any fundamental matrix ${\mathfrak F}$ can be viewed as the matrix associated with a linear map from $\bigwedge^{\alpha_1}  U/C_1$ to $\bigwedge^{\alpha_2} (U/C_2)^{\vee}$, which are isomorphic to $\bigwedge^{s_1+1}(U/C_1)^{\vee}$ and $\bigwedge^{s_2+1}(U/C_2)$ via the Hodge isomorphism. Therefore ${\mathfrak F}$ is related to the fundamental matrix associated with the matrix $[\tau_1|\tau_2]^T$, where the dual Pl\"ucker coordinates appear.
\end{remark}

\subsection{Decomposition of a bifocal Grassmann tensor as sum of indecomposable tensors}

Here we explicitly describe a minimal - not necessarily unique - decomposition of the generalized fundamental matrix ${\mathfrak F}$ as the sum of $rank(\mathfrak F)$ indecomposable tensors (for the different definitions of rank see, for instance, \cite{LA}). For these purposes, we describe the action on the set of generalized fundamental matrices, which is induced by that in the previous section. 

Denote by ${\mathfrak F}_c$ the generalized fundamental matrix associated with the canonical form \eqref{canonicalform}. As recalled in Section \ref{actiongroups}, the connection between the bifocal Grassmann tensor $\mathfrak{F}$ associated with $[M_1|M_2]$ and the bifocal Grassmann tensor $\tilde{\mathfrak{F}}$ arising from (\ref{precanonica}) is given by
\begin{equation}
\label{arising}
\tilde{\mathfrak{F}} = (\bigwedge^{s_2+1}K_2^{-1}) \cdot \mathfrak{F} \bigwedge^{s_1+1}(K_1^{-1})^T.
\end{equation} Moreover, since $G \in GL(k+1)$, we have $\mathfrak{F}_c = det(G) \tilde{\mathfrak{F}}$. In other words, the fundamental matrix associated with $[M_1|M_2] \in {\mathfrak W}_0$ is related to ${\mathfrak F}_c$ as follows:
\begin{equation}
    \label{fundcan}
{\mathfrak F}= (det(G))^{-1}\left( \bigwedge^{s_2+1} K_2 \right) \, {\mathfrak F}_c \,
\left( \bigwedge^{s_1+1} K_1^T \right),
\end{equation}
 where $G$, $K_1$ and $K_2$ are introduced in Section \ref{action}.  

Now, fix bases in $W$, $(U/C_1)^{\vee}$, $(U/C_2)^{\vee}$ where $W=(U/C_1)^{\vee} \cap (U/C_2)^{\vee}$. Then the matrix $\tau_{j}^T$ induces a linear map from $W$ to $(U/C_j)^{\vee}$ for $j=1,2$; hence $\tau_j^T$ is a $(h_j+1) \times i$ matrix. Recall that the Hodge operator $*$ induces an isomorphism between $\bigwedge^{s_1+1} W^{\vee}$ and $\bigwedge^{s_2+1} W$, as the dimension of $W$ is $i$ and $s_1+2+s_2=i$. Also, we have the following commutative diagram, namely:
$$
\xymatrix{
\bigwedge^{s_1+1} W^{\vee} \ar[r]^{*} & \bigwedge^{s_2+1} W \ar[d]^{\bigwedge^{s_2+1} \tau_{2,c}}  \\ 
\bigwedge^{s_1+1}(U/C_1) \ar[r]_{{\mathfrak F}} \ar[u]^{\bigwedge^{s_1+1}\tau^T_{1,c}}& \bigwedge^{s_2+1}(U/C_2)^{\vee},
}
$$
where ${\mathfrak F}$ is by definition a bifocal Grassmann tensor.  Let $I$ be a multi-index of length $s_1+1$ in $\{1, \ldots, i\}$ and denote by $I^{c}$ its complement of length $i-(s_1+1)=s_2+1$. As $I$ varies, denote by $E_I$ the basis of $\bigwedge^{s_1+1}W^{\vee}$ induced by a fixed basis of $W$. Set $F_{I^c}=*(E_I) \in \bigwedge^{s_1+1}W$. 

\begin{proposition}
	\label{decomp_canon_prop} Let 
	$$
	A_c=\left(
	\begin{array}{cc}
	I_{h_1+1} & 0_{h_1+1, k-h_1}  
	\end{array}
	\right), \qquad  \qquad 
	B_c=\left(
	\begin{array}{ccc}
	I_i & 0_{i,k-h_2} & 0_{i,k-h_1} \\
	0_{k-h_1, i} & 0_{k-h_1
		, k-h_2} & I_{k-h_1}
	\end{array}
	\right)
	$$
	be matrices such that $[A_c^T|B_c^T]$ is a $(k+1) \times (h_1+h_2+2)$ matrix, as introduced in Lemma \ref{voidintersection}. Then the corresponding bifocal Grassmann tensor $\mathfrak{F}_c$ has the following minimal decomposition up to sign: 
\begin{equation}
\label{decomp_canon}
\mathfrak{F}_c = \sum_I \left(\left(\bigwedge^{s_1+1} \tau_{1,c} \right) E_{I}\right)   \otimes \left(\left(\bigwedge^{s_2+1}\tau_{2,c}\right) F_{I^c}\right).
\end{equation}
\end{proposition}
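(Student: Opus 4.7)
The strategy is to compute both sides of \eqref{decomp_canon} entrywise in the natural bases and then invoke Theorem \ref{genfundmatteo} to conclude minimality. First I would make the canonical data explicit. From $A_c$ and $B_c$, Lemma \ref{voidintersection} and Remark \ref{hart1} identify the shared subspace $W=(U/C_1)^{\vee}\cap(U/C_2)^{\vee}$ with the span of the first $i$ columns of $[A_c^T|B_c^T]$; accordingly $\tau_{1,c}^T$ is the upper-left $(h_1+1)\times i$ block $\bigl(\begin{smallmatrix} I_i\\ 0\end{smallmatrix}\bigr)$ of $A_c^T$ and $\tau_{2,c}^T$ is the corresponding block of $B_c^T$. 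With these in hand, the two composed wedge maps $\bigwedge^{s_1+1}\tau_{1,c}$ and $\bigwedge^{s_2+1}\tau_{2,c}$ become block inclusions of coordinate wedges, so that $\bigl(\bigwedge^{s_1+1}\tau_{1,c}\bigr)E_I$ and $\bigl(\bigwedge^{s_2+1}\tau_{2,c}\bigr)F_{I^c}$ can be written as single basis elements $e_I\in\bigwedge^{s_1+1}(U/C_1)$ and $f_{I^c}\in\bigwedge^{s_2+1}(U/C_2)^{\vee}$, up to the sign produced by the Hodge operator $*$ implicit in $F_{I^c}=*(E_I)$.

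Next I would compute the left-hand side. By the description right after \eqref{fundwedge}, the entry $F_{I,J}$ of $\mathfrak{F}_c$ is (up to sign $\epsilon(I,J)$) the determinant of the square submatrix of $[A_c^T|B_c^T]$ obtained by deleting the rows indexed by $I$ inside the $A_c^T$-block and by $J$ inside the $B_c^T$-block. Because of the block structure of $[A_c^T|B_c^T]$ in the canonical form, this minor vanishes unless the deleted row-sets $I$ and $J$ are complementary multi-indices inside the first $i$ rows, i.e.\ $J=I^c\subset\{1,\dots,i\}$, in which case the remaining $(k+1)\times(k+1)$ block is (up to a permutation of columns) the identity and the minor equals $\pm 1$. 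Thus $\mathfrak{F}_c$ is supported on pairs $(I,I^c)$ and each nonzero entry is a sign.

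The core step is then to match the signs. On the right-hand side of \eqref{decomp_canon}, the $I$-summand contributes precisely to the $(I,I^c)$-entry, with the sign $(-1)^{\sigma(I,I^c)}$ built into the Hodge pairing $F_{I^c}=*(E_I)$. This must be compared with the permutation sign $\epsilon(I,I^c)$ coming from the definition of $F_{I,J}$. Both are governed by the same lexicographic-versus-shuffle sign convention on $(I,I^c,\text{complement in the full index set})$, and a short bookkeeping check reduces the two to the same permutation; this is where I expect to spend most of the effort, as it is the one genuinely delicate point of the argument.

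Finally, the sum in \eqref{decomp_canon} consists of $\binom{i}{s_1+1}$ indecomposable tensors, one per multi-index $I\subset\{1,\dots,i\}$ of size $s_1+1$. Since $i=s_1+s_2+2$, this number equals $\binom{(s_1+1)+(s_2+1)}{s_1+1}$, which by Theorem \ref{genfundmatteo} is exactly $\operatorname{rk}(\mathfrak{F}_c)$. Hence the decomposition already achieves the tensor rank, so it is minimal.
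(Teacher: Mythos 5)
Your proposal is correct in substance but proceeds along a genuinely different route from the paper. The paper's proof is a short abstract argument: it writes the Hodge operator, up to sign, as the tensor $\sum_I E_I \otimes F_{I^c} \in \bigwedge^{s_1+1}W \otimes \bigwedge^{s_2+1}W$ and then observes, via the commutative diagram displayed just before the proposition (which factors ${\mathfrak F}_c$ through $W=(U/C_1)^{\vee}\cap(U/C_2)^{\vee}$ by means of $\bigwedge^{s_1+1}\tau_{1,c}^T$ and $\bigwedge^{s_2+1}\tau_{2,c}$), that applying $\bigwedge^{s_1+1}\tau_{1,c}\otimes\bigwedge^{s_2+1}\tau_{2,c}$ to this tensor produces exactly ${\mathfrak F}_c$; no minor of the canonical form is computed, and all signs are absorbed into the ``up to sign'' of the statement. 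You instead verify the identity entrywise: you show that the maximal minors of $[A_c^T|B_c^T]$ vanish unless the selected multi-indices are complementary inside $\{1,\dots,i\}$ and equal $\pm 1$ otherwise, and match this with the support of the right-hand side. This is more elementary and effectively proves, in coordinates, the commutativity of the diagram the paper takes as given, at the cost of the sign bookkeeping you flag as the delicate step — a step the paper simply sidesteps by working up to sign, as the statement allows, so your level of rigor there is comparable. Two small points: in your description of $F_{I,J}$ one deletes the \emph{columns} of $A_c^T$ and $B_c^T$ indexed by $I$ and $J$ (equivalently, rows of $A_c$ and $B_c$), not rows of the blocks — your subsequent computation clearly uses the correct convention; and your minimality argument (the sum has $\binom{i}{s_1+1}$ summands, which equals $\operatorname{rk}(\mathfrak{F}_c)$ by Theorem \ref{genfundmatteo} since $i=s_1+s_2+2$) is exactly the paper's, although the paper places it in the remark following the proposition rather than in the proof itself.
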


\begin{proof} Take the basis $E_I$ in $\bigwedge^{s_1+1} W^{\vee}$ as above. The Hodge operator corresponds - up to sign - to the tensor $\sum_I E_I \otimes *(E_I)= \sum_I E_I \otimes F_{I^c} \in \bigwedge^{s_1+1} W \otimes \bigwedge^{s_2+1} W$. If we apply $\bigwedge^{s_1+1} \tau_{1,c} \otimes \bigwedge^{s_2+1} \tau_{2,c}$ to $\sum_I E_I \otimes *(E_I)$, we have an element in $\bigwedge^{s_1+1}(U/C_1)^{\vee} \otimes \bigwedge^{s_2+1} (U/C_2)^{\vee}$, namely ${\mathcal F}_c$. Thus we have 
\begin{equation*}
{\mathfrak F}_c = \sum_I \left(\left(\bigwedge^{s_1+1} \tau_{1,c}\right) E_I \right) \otimes \left( \left(\bigwedge^{s_2+1} \tau_{2,c} \right) F_{I^c} \right).
\end{equation*}
\end{proof}

\begin{remark}
The sum in (\ref{decomp_canon}) has $\binom{i}{s_1 +1} = \rk{\mathfrak{F}_c}$ addenda, so that (\ref{decomp_canon}) is a minimal decomposition of $\mathfrak{F}_c$ as sum of rank $1$ tensors. Notice that this decomposition may not be necessarily unique.
\end{remark}

The combination of \eqref{fundcan} and \eqref{decomp_canon} allows us to prove the following result. 
\begin{corollary}
\label{effetau}
Let $[\tau_{1,c}|\tau_{2,c}]^T$ be the $(h_1+h_2+2) \times i$ matrix corresponding to ${\mathfrak F}_c$. With the same notation adopted in this section, the following holds (up to sign):
\begin{eqnarray*}
\mathfrak{F} &=& \frac{1}{det(G)}\sum_I \left(\left(\bigwedge^{s_1+1}K_1 \bigwedge ^{s_1+1}\tau_{1,c} \right)E_I \right) \otimes \left( \left(\bigwedge^{s_2+1}K_2\bigwedge^{s_2+1} \tau_{2,c}\right)F_{I^c}\right) \\ &=& \frac{1}{det(G)} \sum_I P_I \otimes Q_{I^c},
\end{eqnarray*}
\end{corollary}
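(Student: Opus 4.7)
The plan is a direct combination of formula \eqref{fundcan}, which relates $\mathfrak{F}$ to the fundamental matrix $\mathfrak{F}_c$ in canonical form, with the explicit decomposition of $\mathfrak{F}_c$ established in Proposition \ref{decomp_canon_prop}. Substituting
$$\mathfrak{F}_c = \sum_I \left(\left(\bigwedge^{s_1+1} \tau_{1,c}\right) E_I\right) \otimes \left(\left(\bigwedge^{s_2+1} \tau_{2,c}\right) F_{I^c}\right)$$
into
$$\mathfrak{F} = \frac{1}{\det(G)}\left(\bigwedge^{s_2+1} K_2\right) \mathfrak{F}_c \left(\bigwedge^{s_1+1} K_1^T\right)$$
and using bilinearity of the tensor product allows one to distribute the left multiplication by $\bigwedge^{s_2+1} K_2$ across the second tensor factor, and the right multiplication by $\bigwedge^{s_1+1} K_1^T$ across the first tensor factor of each summand.

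Next, I would invoke functoriality of the exterior power functor, namely the identity $(\bigwedge^j M)(\bigwedge^j N) = \bigwedge^j(MN)$, to merge $\bigwedge^{s_1+1} K_1$ with $\bigwedge^{s_1+1} \tau_{1,c}$ (acting on $E_I$) and $\bigwedge^{s_2+1} K_2$ with $\bigwedge^{s_2+1} \tau_{2,c}$ (acting on $F_{I^c}$). Setting $P_I := \left(\bigwedge^{s_1+1} K_1\right)\left(\bigwedge^{s_1+1}\tau_{1,c}\right) E_I$ and $Q_{I^c} := \left(\bigwedge^{s_2+1} K_2\right)\left(\bigwedge^{s_2+1}\tau_{2,c}\right) F_{I^c}$ then yields the compact form displayed in the statement.

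The only delicate point, and the step I expect to require the most care, is the bookkeeping between the matrix and tensor viewpoints: the appearance of $K_1^T$ (instead of $K_1$) on the right of \eqref{fundcan} reflects that $\mathfrak{F}$ is regarded as the matrix of a linear map via the Hom-identification in \eqref{fundwedge}, whereas once we translate back to the bare tensor $\mathfrak{F} \in \bigwedge^{s_1+1}(U/C_1) \otimes \bigwedge^{s_2+1}(U/C_2)^{\vee}$ the transpose is absorbed and $\bigwedge^{s_1+1}K_1$ (without transpose) acts naturally on the first tensor slot. Once this identification is made explicit, the computation is purely formal and no further ingredient beyond functoriality of $\bigwedge^j$ is required.
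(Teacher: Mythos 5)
Your proposal is correct and follows exactly the route the paper intends: the paper's own (essentially one-line) proof is the observation that the corollary is ``the combination of \eqref{fundcan} and \eqref{decomp_canon}'', i.e.\ substituting the canonical decomposition of ${\mathfrak F}_c$ from Proposition \ref{decomp_canon_prop} into \eqref{fundcan} and using functoriality of the exterior power. Your extra remark on absorbing the transpose when passing between the Hom-identification of \eqref{fundwedge} and the bare tensor is a correct and useful clarification of bookkeeping the paper leaves implicit, but it does not change the argument.
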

where $P_I \in \bigwedge^{s_1+1}(U/C_1)$ and $Q_{I^c} \in \bigwedge^{s_2+1}(U/C_2)^{\vee}$.

\begin{example}
\label{examplep4p3} Set $(\alpha_1, \alpha_2) = (3,3)$, so $k=5$. Moreover, set $h_1=4$ and $h_2=3$. Consider two projections from $\Pin{5}$ to
$\Pin{4}$ and $\Pin{3}$ with profile $(3,3)$. In this case $i=3$. Pick the matrix $[A^T|B^T]$ of size $6 \times 9$ where $A$ and $B$ are the projection matrices, namely:
$$
\left[
\begin{array}{cccccccccc}
1 & 0 & 0 & 0 & 0 & | & 0 & 0 & 0 & 1 \\
0 & 1 & 0 & 0 & 0 & | & 0 & 0 & 1 & 0 \\
0 & 0 & 1 & 0 & 0 & | & 0 & 1 & 0 & 0 \\
0 & 0 & 0 & 1 & 0 & | & 1 & 0 & 0 & 0 \\
0 & 0 & 0 & 0 & 1 & | & 1 & 0 & 1 & 0 \\
1 & 1 & 1 & 1 & 1 & | & 0 & 1 & 0 & 1 
\end{array}
\right]
$$

Set 
$$
K_1=
\left[
\begin{array}{ccccc}
1 & 0 & 0 & 0 & 0 \\
0 & 1 & 0 & 0 & 0 \\
0 & 0 & 1 & 0 & 0\\
0 & -1 & 0 & 1 & 0 \\
0 & 0 & 0 & 0 & 1
\end{array}
\right],
\qquad
K_2=
\left[
\begin{array}{ccccc}
0 & -1 & 0 & 1 \\
0 & 0 & 1 & 0 \\
0 & 1 & 0 & 0 \\
1 & 0 & 0 & 0
\end{array}
\right].
$$
We have 
\begin{equation*}
\left[%
\begin{array}{c|c}
A^T & B^T \\
\end{array}%
\right] \left[%
\begin{array}{c|c}
K_1 &  \mathbf{0} \\
\hline 
\mathbf{0} & K_2 \\
\end{array}%
\right] = 
\left[
\begin{array}{ccccccccc}
\begin{array}{cccccccccc}
1 & 0 & 0 & 0 & 0 & | & 1 & 0 & 0 & 0 \\
0 & 1 & 0 & 0 & 0 & | & 0 & 1 & 0 & 0 \\
0 & 0 & 1 & 0 & 0 & | & 0 & 0 & 1 & 0 \\
0 & -1 & 0 & 1 & 0 & | & 0 & -1 & 0 & 1 \\
0 & 0 & 0 & 0 & 1 & | & 0 & 0 & 0 & 1 \\
1 & 0 & 1 & 1 & 1 & | & 1 & 0 & 1  & 0
\end{array}
\end{array}
\right],
\end{equation*}
i.e. we have turned the matrix into the form \ref{precanonica}.
Finally we consider the matrix 
$$
G=
\left[
\begin{array}{cccccc}
1 & 0 & 0 & 0 & 0 & 0 \\
0 & 1 & 0 & 0 & 0 & 0 \\
0 & 0 & 1 & 0 & 0 & 0 \\
1/2 & 1/2 & -1/2 & 1/2 & -1/2 & 1/2 \\
-1/2 & -1/2 & -1/2 & -1/2 & 1/2 & 1/2 \\
1/2 & 1/2 & 1/2 & 1/2 & 1/2 & -1/2
\end{array}
\right]
$$
and get
\begin{equation*}
G\left[%
\begin{array}{c|c}
A^T & B^T \\
\end{array}%
\right] \left[%
\begin{array}{c|c}
K_1 &  \mathbf{0} \\
\hline 
 \mathbf{0} & K_2 \\
\end{array}%
\right] = 
\left[
\begin{array}{ccccccccc}
\begin{array}{cccccccccc}
1 & 0 & 0 & 0 & 0 & | & 1 & 0 & 0 & 0 \\
0 & 1 & 0 & 0 & 0 & | & 0 & 1 & 0 & 0 \\
0 & 0 & 1 & 0 & 0 & | & 0 & 0 & 1 & 0 \\
0 & 0 & 0 & 1 & 0 & | & 0 & 0 & 0 & 0 \\
0 & 0 & 0 & 0 & 1 & | & 0 & 0 & 0 & 0 \\
0 & 0 & 0 & 0 & 0 & | & 0 & 0 & 0  & 1
\end{array}
\end{array}
\right],
\end{equation*}
which is the canonical form of $[A^T|B^T]$. Notice that $det(G) = -\frac{1}{2}$. 

The $5 \times 3$ matrix $\tau_1^T$ and the $4 \times 3$ matrix $\tau_2^T$ are given by:
$$
\tau_1^T= \left[
\begin{array}{ccc}
1 & 0 & 0 \\
0 & 1 & 0 \\
0 & 0 & 1 \\
0 & -1 & 0 \\
0 & 0 & 0
\end{array}
\right], \qquad
\tau_2^T= \left[
\begin{array}{ccc}
0 & -1 & 0 \\
0 & 0 & 1 \\
0 & 1 & 0 \\
1 & 0 & 0
\end{array}
\right].
$$


\noindent The transpose of the generalized fundamental matrix ${\mathfrak F}_c$ of the canonical form  above is given by
$$
\left[
\begin{array}{cccc}
0 & 0 & -1 & 0\\
0 & 1 & 0 & 0 \\
0 & 0 & 0 & 0 \\
0 & 0 & 0 & 0 \\
-1 & 0 & 0 & 0 \\
0 & 0 & 0 & 0 \\
0 & 0 & 0 & 0 \\
0 & 0 & 0 & 0 \\
0 & 0 & 0 & 0 \\
0 & 0 & 0 & 0
\end{array}
\right]
$$
which can be decomposed as follows:
\begin{eqnarray*}
&-&\left[
\begin{array}{cccccccccc}
1 & 0 & 0 & 0 & 0 & 0 & 0 & 0 & 0 & 0
\end{array}
\right]
\otimes 
\left[
\begin{array}{cccc}
0 & 0 & 1 & 0
\end{array}
\right] \\ &+& \left[\begin{array}{cccccccccc}
0 & 1 & 0 & 0 & 0 & 0 & 0 & 0 & 0 & 0
\end{array}
\right]
\otimes 
\left[
\begin{array}{cccc}
0 & 1 & 0 & 0
\end{array}
\right] \\ &-& \left[\begin{array}{cccccccccc}
0 & 0 & 0 & 0 & 1 & 0 & 0 & 0 & 0 & 0
\end{array}
\right]
\otimes 
\left[
\begin{array}{cccc}
1 & 0 & 0 & 0
\end{array}
\right].
\end{eqnarray*}

Notice that $(det(G)^{-1}){\mathfrak F_c}^T= \left(\bigwedge^2K_1^{-1}\right) {\mathfrak F}^T (K_2^{-1})^T$ where 
$$
{\mathfrak F}^T=\left[
\begin{array}{cccc}
0 & 2 & 0 & 0\\
2 & 0 & -2 & 0 \\
0 & -2 & 0 & 0 \\
0 & 0 & 0 & 0 \\
0 & 0 & 0 & 2 \\
0 & 0 & 0 & 0 \\
0 & 0 & 0 & 0 \\
0 & 0 & 0 & 2 \\
0 & 0 & 0 & 0 \\
0 & 0 & 0 & 0
\end{array}
\right]
$$
which, up to the constant $det(G)^{-1},$ can be decomposed as follows:
\begin{eqnarray*}
&-&\left[
\begin{array}{cccccccccc}
1 & 0 & -1 & 0 & 0 & 0 & 0 & 0 & 0 & 0
\end{array}
\right]
\otimes 
\left[
\begin{array}{cccc}
0 & 1 & 0 & 0
\end{array}
\right] \\ &+& 
\left[\begin{array}{cccccccccc}
0 & 1 & 0 & 0 & 0 & 0 & 0 & 0 & 0 & 0
\end{array}
\right]
\otimes 
\left[
\begin{array}{cccc}
-1 & 0 & 1 & 0
\end{array}
\right] \\ &-& \left[\begin{array}{cccccccccc}
0 & 0 & 0 & 0 & 1 & 0 & 0 & 1 & 0 & 0
\end{array}
\right]
\otimes 
\left[
\begin{array}{cccc}
0 & 0 & 0 & 1
\end{array}
\right],
\end{eqnarray*}
as predicted in Corollary \ref{effetau}. In particular, $P_I \in \bigwedge^2(U/C_1)$ and $Q_{I^c} \in (U/C_2)^{\vee}$ for every choice of multi-indices $I$.
\end{example}

\section{Moduli spaces of Bifocal Grassmann Tensors}

\subsection{The varieties of generalized fundamental matrices} Fix a vector space $U$ of dimension $k+1$. Assume $\alpha_1$ and $\alpha_2$ are two positive integers such that $\alpha_1+\alpha_2=k+1$. Let $[M_1|M_2]$ be a general point in ${\mathfrak W}_0$. Recall that $M_1^T$ and $M_2^T$ are two general projection matrices, in the sense of Lemma \ref{voidintersection}. Notice that $C_j=Ker(M_j^T)$ for $j=1,2$. Therefore, we have a linear projection 
$$
\pi: {\mathbb P}\left(\bigwedge^{k+1}(U/C_1 \oplus U/C_2) \right) \dashrightarrow {\mathbb P}\left(\bigwedge^{\alpha_1}(U/C_1) \otimes \bigwedge^{\alpha_2} (U/C_2) \right) 
$$

The open set ${\mathfrak Q}_0$ (introduced at the end of section \ref{action}) lies in $G_1$, which lies in the projective space ${\mathbb P}\left(\bigwedge^{k+1}(U/C_1 \oplus U/C_2) \right)$ by the Pl\"ucker embedding. The (Zariski) closure of the image in ${\mathbb P}\left(\bigwedge^{\alpha_1}(U/C_1) \otimes \bigwedge^{\alpha_2} (U/C_2) \right)$ of ${\mathfrak Q}_0$ under $\pi$ is called {\em the variety ${\mathcal X}_{(\alpha_1, \alpha_2)}$ of generalized fundamental matrices or bifocal Grassmann tensors with profile} $(\alpha_1,\alpha_2)$. As proved in \cite{tubbAMPA}, it has dimension $(k+1)(h_1+h_2-k+1)-1$. Notice that the dimension does not depend on the profile. In fact, for each choice of $(\alpha_1, \alpha_2)$ such that $\alpha_1+ \alpha_2=k+1$, there exists a variety of bifocal Grassmann tensors ${\mathcal X}_{(\alpha_1, \alpha_2)}$. In other words, different profiles give different birational connected components. Moreover, as a consequence of \cite{Hart-Schaf}, a general point $p \in {\mathcal X}_{(\alpha_1, \alpha_2)}$ corresponds to a $\left(({\mathbb C}^*)^2/{\mathbb C}^*\right)$-orbit $[z \lambda M_1 | z \mu M_2]$ for $z, \lambda, \mu \in {\mathbb C}^*$. Every point of such an orbit corresponds to the generalized fundamental matrix $z^{k+1} \lambda^{\alpha_1} \mu^{\alpha_2} {\mathfrak F}$, where ${\mathfrak F}$ is the generalized fundamental matrix associated with $[M_1|M_2]$.  As a consequence, ${\mathcal X}_{(\alpha_1, \alpha_2)}$ can be viewed as a moduli space of $\left(({\mathbb C}^*)^2/{\mathbb C}^*\right)$-orbits of Grassmann tensors. 

\begin{theorem}
\label{homogeneous}
For each pair $(\alpha_1, \alpha_2)$ corresponding to a profile, the variety of bifocal Grassmann tensors ${\mathcal X}_{(\alpha_1, \alpha_2)}$ is birational to a homogeneous space with respect to the action of $GL(h_1+1) \times GL(h_2+1)$.
\end{theorem}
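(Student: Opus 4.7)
The plan is to use the canonical form \eqref{canonicalform} to realise $\mathfrak{W}_0$ as a single orbit of the product group $GL(k+1)\times GL(h_1+1)\times GL(h_2+1)$ (acting on the left by $GL(k+1)$ and on the right by $GL(h_1+1)\times GL(h_2+1)$), and then to propagate this transitivity through the two successive quotients that build $\mathcal{X}_{(\alpha_1,\alpha_2)}$. The construction recalled in \eqref{precanonica}--\eqref{canonicalform} explicitly produces, for every $[M_1|M_2]\in \mathfrak{W}_0$, a triple $(G,K_1,K_2)$ that moves $[M_1|M_2]$ to the fixed canonical matrix, so $\mathfrak{W}_0$ is that single orbit. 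Because the left and right actions commute, taking the quotient $\mathfrak{Q}_0=\mathfrak{W}_0/PGL(k+1)\subset G_1$ leaves a transitive residual right action of $GL(h_1+1)\times GL(h_2+1)$; thus $\mathfrak{Q}_0$ is itself a homogeneous space.

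Next I would push this through the rational projection $\pi$ defining $\mathcal{X}_{(\alpha_1,\alpha_2)}$. The map $\pi$ is $GL(h_1+1)\times GL(h_2+1)$-equivariant: this is a direct reading of \eqref{arising}, which says that right multiplication by $\mathrm{diag}(K_1,K_2)$ on $[M_1|M_2]$ transforms the associated generalised fundamental matrix through the induced action $\bigwedge^{s_j+1}K_j^{-1}$ on the target. The equivariant dominant image of a homogeneous space is again homogeneous, so $\pi(\mathfrak{Q}_0)$ is a single $GL(h_1+1)\times GL(h_2+1)$-orbit inside $\mathbb{P}\!\left(\bigwedge^{\alpha_1}(U/C_1)\otimes \bigwedge^{\alpha_2}(U/C_2)\right)$. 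Its Zariski closure is, by definition, $\mathcal{X}_{(\alpha_1,\alpha_2)}$, and therefore $\mathcal{X}_{(\alpha_1,\alpha_2)}$ contains a dense orbit, which is precisely the condition for it to be birational to a homogeneous space $(GL(h_1+1)\times GL(h_2+1))/H$, with $H$ the stabiliser of any base point of that dense orbit.

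The step that will need the most care, rather than being a conceptual obstacle, is the compatibility between the equivariant structure and the fibres of $\pi$. By the Hartley--Schaffalitzky-type statement recalled just before the theorem, two pairs in $\mathfrak{W}_0$ yield the same projective bifocal tensor precisely when they differ by the $(({\mathbb C}^*)^2/{\mathbb C}^*)$-action of independently rescaling $M_1$ and $M_2$; since this scalar torus already sits inside the centre of $GL(h_1+1)\times GL(h_2+1)$, it is absorbed into the stabiliser $H$ and does not destroy transitivity. This has to be spelt out carefully to confirm that $\pi$ really induces a birational equivalence between $\mathfrak{Q}_0/(({\mathbb C}^*)^2/{\mathbb C}^*)$ and an open dense subset of $\mathcal{X}_{(\alpha_1,\alpha_2)}$, and not merely a finite cover. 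Once that compatibility is in place, the birational identification $\mathcal{X}_{(\alpha_1,\alpha_2)}\dashrightarrow (GL(h_1+1)\times GL(h_2+1))/H$ follows at once from the two previous paragraphs.
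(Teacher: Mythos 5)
Your argument is correct and follows essentially the same route as the paper: the canonical form \eqref{canonicalform} makes ${\mathfrak Q}_0$ a single $GL(h_1+1)\times GL(h_2+1)$-orbit, and this transitivity is transported to a dense open subset of ${\mathcal X}_{(\alpha_1,\alpha_2)}$ via the projection $\pi$. You merely spell out the steps the paper leaves implicit (the equivariance of $\pi$ read off from \eqref{arising} and the absorption of the scalar torus into the stabiliser), which is a welcome but not divergent elaboration.
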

\begin{proof} As seen in the previous section, there is a right action of $GL(h_1 + 1) \times GL(h_2+1)$ on ${\mathfrak W}_0$, which induces an action on ${\mathfrak Q}_0$ . Notice that ${\mathfrak Q}_0$  is a homogeneous space with respect to the action of the group $GL(h_1+1) \times GL(h_2+1)$ because any matrix can be put in canonical form. This implies that for each pair $(\alpha_1, \alpha_2)$ there exists a (Zariski) non-empty open set in ${\mathcal X}_{(\alpha_1, \alpha_2)}$ that is a homogeneous space with respect to the action of $GL(h_1+1) \times GL(h_2+1)$.
\end{proof}

Now, set $s_j=h_j+1-\alpha_j$ and recall that $i=s_1+s_2+2$. Recall that by the Hodge operator we have
$$
\bigwedge^{\alpha_1} (U/C_1) \otimes \bigwedge^{\alpha_2} (U/C_2) \simeq \bigwedge^{s_1+1} (U/C_1)^{\vee} \otimes \bigwedge^{s_2+1} (U/C_2)^{\vee}.
$$
Therefore any $p \in {\mathcal X}_{(\alpha_1, \alpha_2)}$ corresponds to a $\left(({\mathbb C}^*)^2/{\mathbb C}^*\right)$-orbit of an $i$-dimensional subspace $T_p \subset (U/C_1)^{\vee} \oplus (U/C_2)^{\vee}$, i.e., a point in $G_2=G(i, (U/C_1)^{\vee} \oplus (U/C_2)^{\vee})$, which is mapped to ${\mathbb P}\left(\bigwedge^i\left((U/C_1)^{\vee} \oplus (U/C_2)^{\vee}\right)\right)$ under the Pl\"ucker embedding. As before, the linear projection $${\mathbb P}\left(\bigwedge^i\left((U/C_1)^{\vee} \oplus (U/C_2)^{\vee}\right)\right) \dashrightarrow {\mathbb P}\left(\bigwedge^{s_1+1} (U/C_1)^{\vee} \otimes \bigwedge^{s_2+1} (U/C_2)^{\vee} \right)$$ maps $G_2$ to a projective variety ${\mathcal X}_{(s_1, s_2)}$, which is birational to ${\mathcal X}_{(\alpha_1,\alpha_2)}$, as $G_2$ is the dual Grassmanniann of $G_1$. In what follows, we will focus our attention on ${\mathcal X}_{(s_1, s_2)}$; statements for ${\mathcal X}_{(\alpha_1,\alpha_2)}$ can be deduced in a similar fashion. Recall that $\dim({\mathcal X}_{(\alpha_1,\alpha_2)})= \dim({\mathcal X}_{(s_1, s_2)})=(k+1)(h_1+h_2-k+1)-1=(k+1)i-1$.

\begin{remark}
\label{strat}
By the decomposition described in \ref{effetau}, there exists a rational map $\varphi$ from the variety ${\mathcal X}_{(s_1, s_2)}$ to the secant variety $Sec_r(G(s_1+1, i) \times G(s_2+1,i))$. This map sends a general fundamental matrix ${\mathcal F}'$ in dual Pl\"ucker coordinates to the subspace generated by the $r$-tuple $\{(P_I, Q_{I^c})$ : $I$ \, \hbox{multi-index of length} $s_1+1$\} where $r$ is the rank of ${\mathcal F}'$. More precisely, there exists an isomorphism between $G(s_1+1,i)$ and $G(s_2+1,i)$ which is induced by the Hodge operator. Denote by $Graph(h) \subset G(s_1+1, i) \times G(s_2+1,i)$ the graph of this isomorphism, and by $Sec_r(Graph(h)) \subset Sec_r(G(s_1+1, i) \times G(s_2+1,i))$ the corresponding secant variety. Therefore, by the decomposition recalled before, the rational map $\varphi$ sends ${\mathcal X}_{(s_1, s_2)}$ to  $Sec_r(Graph(h))$. Since any linear combination of points $P_I$ yields a different fundamental matrix with the same image, the map $\varphi$ has a positive dimensional fiber. 

\end{remark}

\subsection{A natural action on ${\mathcal X}_{(s_1,s_2)}$} 

First, we investigate the action induced by \eqref{tauaction} on this variety of bifocal Grassmann tensors. As recalled before, any general point any $p \in {\mathcal X}_{(s_1, s_2)}$ corresponds to a $\left(({\mathbb C}^*)^2/{\mathbb C}^*\right)$-orbit of an $i$-dimensional subspace $T_p \subset (U/C_1)^{\vee} \oplus (U/C_2)^{\vee}$, which identifies a unique generalized fundamental form. The group $GL(h_1+1) \times GL(h_2+1) \times PGL(i)$ involved in \ref{tauaction} acts on $p$ by sending it to the point corresponding to the generalized fundamental matrix, which is identified by the $\left(({\mathbb C}^*)^2/{\mathbb C}^*\right)$-orbit $[(\Delta_1 \tau_1^T \Gamma^T )  |(\Delta_2 \tau_2^T\Gamma^T)]^T$.

\begin{lemma}
\label{effeaction}
Under the action $\ref{tauaction}$, and with the same notation adopted therein, a fundamental matrix $$
\mathfrak{F} = \frac{1}{det(G)}\sum_I \left(\left(\bigwedge^{s_1+1}K_1 \bigwedge ^{s_1+1}\tau_{1,c} \right)E_I \right) \otimes \left( \left(\bigwedge^{s_2+1}K_2\bigwedge^{s_2+1} \tau_{2,c}\right)F_{I^c}\right) $$ 
is sent to 
$$
\sum_I \left( \left(\bigwedge^{s_1+1} K_1 \bigwedge^{s_1+1} (\Gamma^T \tau_{1,c} \Delta_1^T) \right) E_I \right)\otimes \left(\left( \bigwedge^{s_2+1} K^T \bigwedge^{s_2+1} (\Gamma^T \tau_{2,c} \Delta_2^T)  \right) F_{I^c}\right)
$$
\end{lemma}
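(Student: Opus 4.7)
The plan is to read the action \eqref{tauaction} off against the explicit decomposition of $\mathfrak{F}$ provided by Corollary \ref{effetau}. Since \eqref{tauaction} sends $\tau_j^T$ to $\Delta_j \tau_j^T \Gamma$, transposition gives the corresponding assignment $\tau_{j,c} \mapsto \Gamma^T \tau_{j,c} \Delta_j^T$ at the level of the canonical representatives. Once this translation is set up, the rest of the argument is essentially a functoriality computation.

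The second step is to substitute this transformation into the decomposition formula. The key identity is the functoriality of exterior powers,
\begin{equation*}
\bigwedge^{s_j+1}\bigl(\Gamma^T \tau_{j,c} \Delta_j^T\bigr) \;=\; \bigwedge^{s_j+1}\Gamma^T \;\cdot\; \bigwedge^{s_j+1}\tau_{j,c} \;\cdot\; \bigwedge^{s_j+1}\Delta_j^T,
\end{equation*}
so that when I substitute $\Gamma^T \tau_{j,c}\Delta_j^T$ for $\tau_{j,c}$ in the sum $\sum_I$ of Corollary \ref{effetau}, each tensor factor $\bigl(\bigwedge^{s_j+1}K_j\bigr)\bigl(\bigwedge^{s_j+1}\tau_{j,c}\bigr) E_{\bullet}$ is replaced by $\bigl(\bigwedge^{s_j+1}K_j\bigr)\bigl(\bigwedge^{s_j+1}(\Gamma^T \tau_{j,c}\Delta_j^T)\bigr)E_{\bullet}$. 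This directly matches the right-hand side in the statement.

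The main obstacle is justifying that the auxiliary matrices $G$, $K_1$, $K_2$ are unaffected by the action. These matrices come from the reduction of a specific $[M_1|M_2]\in\mathfrak{W}_0$ to canonical form in Section \ref{action}; they depend on the starting representative, while the action \eqref{tauaction} is applied intrinsically on $G_2$ through the $\tau_{j,c}$ factors alone. Hence $G$, $K_1$, $K_2$ survive unchanged and only the wedge factors built from $\tau_{j,c}$ get modified. The other bookkeeping point is that the scalar $(\det G)^{-1}$ appearing in Corollary \ref{effetau} is absorbed into the $\bigl(({\mathbb C}^*)^2/{\mathbb C}^*\bigr)$-orbit ambiguity of the bifocal Grassmann tensor, which is why it is omitted from the statement; the identity is therefore to be read up to the projective scaling inherent in the definition of $\mathfrak{F}$. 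With these two points settled, the proof reduces to the straightforward substitution described above.
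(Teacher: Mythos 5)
Your proposal is correct and follows essentially the same route as the paper's own (very brief) proof: apply the action \eqref{tauaction} to the canonical factors $\tau_{j,c}$, i.e.\ $\tau_{j,c}\mapsto\Gamma^{T}\tau_{j,c}\Delta_j^{T}$, and then plug the transformed matrices back into the decomposition of Corollary \ref{effetau}, with $G$, $K_1$, $K_2$ left untouched. Your added remarks on the functoriality of $\bigwedge^{s_j+1}$ and on the $(\det G)^{-1}$ factor being irrelevant up to the $(\mathbb{C}^*)^2/\mathbb{C}^*$ scaling only make explicit what the paper leaves implicit.
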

\begin{proof}
Let $[\tau_{1,c}|\tau_{2,c}]^T$ be the $(h_1 + h_2+2) \times i$ matrix defining the bifocal Grassmann tensor associated with the canonical form. Under the action in \eqref{tauaction}, this is mapped to $[(\Delta_1 \tau_{1,c}^T \Gamma^T )  |(\Delta_2 \tau_{2,c}^T\Gamma^T)]^T$. Corollary \ref{effetau} tells us how to associate the fundamental matrix with it.
\end{proof}

In particular, the right action of the group $GL(i)$ sends any generalized fundamental matrices to itself, as proved by the following result.
\begin{proposition} 
\label{contocri}
For any $\Gamma \in GL(i)$ one has 
$$
\left(\bigwedge^{s_1+1}\Gamma\right) \left(\sum_I E_I \otimes F_{I^c}\right)\left(\bigwedge^{s_2+1}\Gamma\right)^T = \det(\Gamma) \sum_I E_I \otimes F_{I^c}.
$$
\end{proposition}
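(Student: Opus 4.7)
The plan is to prove the identity as a matrix equality using the generalized Laplace expansion. First I would spell out $M:=\sum_I E_I\otimes F_{I^c}$ as a matrix indexed by $(s_1+1)$-element subsets $I\subseteq\{1,\dots,i\}$ (for rows) and $(s_2+1)$-element subsets $J$ (for columns). By the definition of the Hodge operator recalled in the remark, $F_{I^c}=*(E_I)=\epsilon(I,I^c)\,e_{I^c}$, so the only nonzero entry in row $I$ is $M_{I,I^c}=\epsilon(I,I^c)$. In particular $M_{I,J}=\epsilon(I,I^c)\,\delta_{J,I^c}$.

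Next I would compute the $(I,J)$-entry of the LHS using that the $(A,B)$-entry of $\bigwedge^{p}\Gamma$ is the minor $\det(\Gamma[A|B])$. After carrying out one of the sums using $M_{K,L}=\epsilon(K,K^c)\,\delta_{L,K^c}$, one finds
\[
\bigl[(\bigwedge^{s_1+1}\Gamma)\,M\,(\bigwedge^{s_2+1}\Gamma)^{T}\bigr]_{I,J}
=\sum_{K}\det(\Gamma[I|K])\,\epsilon(K,K^c)\,\det(\Gamma[J|K^c]),
\]
where $K$ ranges over $(s_1+1)$-subsets of $\{1,\dots,i\}$.

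The heart of the argument is to recognize this sum as an instance of the generalized Laplace expansion. Build the $i\times i$ matrix $\tilde\Gamma$ whose rows in positions $I$ are the rows of $\Gamma$ indexed by $I$, and whose rows in positions $I^c$ are the rows of $\Gamma$ indexed by $J$ (in increasing order). Laplace expansion along the block of rows sitting in positions $I$ gives
\[
\det(\tilde\Gamma)=\epsilon(I,I^c)\sum_{K}\epsilon(K,K^c)\,\det(\Gamma[I|K])\,\det(\Gamma[J|K^c]).
\]
If $J=I^c$, then $\tilde\Gamma=\Gamma$ and the sum equals $\epsilon(I,I^c)\det(\Gamma)$. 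If $J\neq I^c$, the cardinality constraint $|I|+|J|=i$ forces $I\cap J\neq\emptyset$, so $\tilde\Gamma$ has two equal rows and the sum vanishes. Combining these two cases, the $(I,J)$-entry of the LHS equals $\det(\Gamma)\,\epsilon(I,I^c)\,\delta_{J,I^c}=\det(\Gamma)\,M_{I,J}$, which is exactly the desired identity.

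The main obstacle I anticipate is sign bookkeeping: the convention for $\epsilon(\cdot,\cdot)$ used in the Hodge operator must be matched precisely to the sign arising in the Laplace expansion (and in the re-ordering of rows of $\tilde\Gamma$ when $J=I^c$). A more conceptual but less explicit alternative is to observe that $M$ simply encodes the Hodge star associated with the volume form $e_1\wedge\cdots\wedge e_i\in\bigwedge^{i}W$; since $\Gamma\in GL(i)$ multiplies this top form by $\det(\Gamma)$, the Hodge star itself scales by $\det(\Gamma)$, and the identity follows without explicit sign manipulations.
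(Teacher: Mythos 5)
Your proof is correct and follows essentially the same route as the paper's: identify $\sum_I E_I\otimes F_{I^c}$ with the matrix of the Hodge operator, which has entries $\pm 1$ exactly at complementary multi-indices, and evaluate the entries of the left-hand side via the generalized Laplace expansion by complementary minors (your repeated-row argument for $J\neq I^c$ is just the standard form of that expansion's vanishing case). Your write-up is in fact a bit more careful about the sign, pinning down $\det(\Gamma)\,\epsilon(I,I^c)$ where the paper only records $\pm\det(\Gamma)$, but the underlying argument is the same.
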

\begin{proof}
Recall that, for $h=1,2$,  the elements of $\bigwedge^{s_h+1}\Gamma^T$ are the minors $m_{(i_1 \dots i_{s_h+1})}^{(j_1 \dots j_{s_h+1})}$ of the rows $(i_1 \dots i_{s_h+1})$ (with $i_1 < \dots <i_{s_h+1}$)  and of the columns $(j_1 \dots j_{s_h+1})$ (with $j_1 < \dots <j_{s_h+1}$) of $\Gamma^T$. The rows of $
(\bigwedge^{s_1+1}\Gamma^T)$ are indexed following the lexicographic order for the $s_1+1-$tuples $(i_1 \dots i_{s_1+1})$, while the columns of $
(\bigwedge^{s_1+1}\Gamma)$ are indexed following the lexicographic order for the $s_1+1-$tuples $(j_1 \dots j_{s_h+1}).$  Recall also that the only non vanishing entries of $\sum_I E_I \otimes F_{I^c}$ correspond to the $\pm 1$ on the secondary diagonal, as $\sum_I E_I \otimes F_{I^c}$ is the matrix associated with the Hodge operator. To prove the result, it suffices to apply the generalized Laplace expansion by complementary minors in order to see that the element of row $(i_1 \dots i_{s_2+1})$ and column  $(h_1 \dots h_{s_1})$ of the matrix at the left side of the equality in the statment is $\pm \det(\Gamma)$ if 
$(i_1 \dots i_{s_2+1})$ and  $(h_1 \dots h_{s_1})$ are complementary multi-indices, and zero otherwise. 

\end{proof}

Analogously to Lemma \ref{homogeneous}, the left action of $GL(h_1+1) \times GL(h_2+1)$ is transitive, so ${\mathcal X}_{(s_1,s_2)}$ is birational to a homogeneous space, which also follows from the birational equivalence with ${\mathcal X}_{(\alpha_1, \alpha_2)}$.


\subsection{A less natural action on ${\mathcal X}_{(s_1, s_2)}$} In what follows, we will prove a result on the geometric structure on ${\mathcal X}_{(s_1, s_2)}$. More precisely, pick a general point $p$ in ${\mathcal X}_{(s_1, s_2)}$. There exists a  $\left(({\mathbb C}^*)^2/{\mathbb C}^*\right)$-orbit of a subspace $[T_p \subset (U/C_1)^{\vee} \oplus (U/C_2)^{\vee}] \in G_2=G(i, h_1+h_2+2)$. Let us consider the group ${\mathcal H}$ of pairs $g = (\Delta_1, \Delta_2)$ where
$$
\Delta_1= \left(
\begin{array}{cc}
H & 0 \\
0 & V_1
\end{array}
\right), \qquad 
\Delta_2= \left(
\begin{array}{cc}
H & 0 \\
0 & V_2
\end{array}
\right),
$$
and $H \in GL(i)$ and $V_j \in GL(h_j+1-i)$ for $j=1,2$. The group $\left(({\mathbb C}^*)^2/{\mathbb C}^*\right)$ acts on ${\mathcal H}$ by sending $g=(\Delta_1, \Delta_2)$ to $(\zeta \alpha \Delta_1, \zeta \beta \Delta_2)$. Denote by ${\mathcal G}$ the quotient of ${\mathcal H}$ with respect to such an action, which has dimension $i^2+(h_1+1-i)^2+(h_2+1-1)^2-1$. The group ${\mathcal G}$  acts on ${\mathcal X}_{(s_1, s_2)}$ as follows. A general point $p$, which corresponds to a Grassmann tensor ${\mathfrak F}$ and is associated with the orbit $[z\lambda \tau_1| z \mu \tau_2]^T$, is sent to the point $q \in {\mathcal X}_{(s_1, s_2)}$ associated with $[z\zeta \lambda\alpha \tau_1\Delta_1^T|z \zeta \mu \beta \tau_2\Delta_2^T]^T$. The main result of this section is the following theorem, which will be proved in different steps.

\begin{theorem}
\label{ratfibr}
Let $\alpha_1, \alpha_2$ be a pair of non-negative integers such that $\alpha_1+\alpha_2=k+1$. Fix $h_1, h_2$ such that $k > \max \{h_1, h_2\}$ and $k \leq h_1+h_2+1$, as well as a $(k+1)$-dimensional vector space $U$. Set $s_j=h_j+1-\alpha_j$ for $j=1,2$. Then there exists a dominant rational map $\Psi: {\mathcal X}_{(s_1, s_2)} \dashrightarrow G(i, U^{\vee})$ such that the following hold:
\begin{itemize}
    \item $G(i,U^{\vee})$ is birationally ${\mathcal G}$-equivariant, that is, there exists a non-empty open set ${\mathfrak U}$ of ${\mathcal X}_{(s_1,s_2)}$ such that $\Psi(g.p)=\Psi(p)$ for every $p \in {\mathfrak U}$ and every $g \in {\mathcal G}$;
    \item the general orbit is isomorphic to $PGL(i)$.
\end{itemize}
\end{theorem}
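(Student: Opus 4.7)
My plan is to construct $\Psi$ from the geometry of the pair of projection centres. For a representative $(M_1, M_2) \in \mathfrak{W}_0$ of a general $p \in \mathcal{X}_{(s_1, s_2)}$, the kernels $C_j := \ker M_j^T \subset U$ satisfy $C_1 \cap C_2 = \{0\}$ by Lemma \ref{voidintersection}, so $\dim(C_1+C_2) = k+1-i$. I would set
\[
\Psi(p) := (C_1+C_2)^{\perp} \in G(i, U^\vee),
\]
which is an $i$-dimensional subspace of $U^\vee$. Crucially, $(C_1+C_2)^{\perp}$ is invariant under the scaling $(\lambda,\mu) \in (\mathbb{C}^*)^2/\mathbb{C}^*$ on $(M_1, M_2)$ and under the right $GL(h_1+1)\times GL(h_2+1)$-action, since right multiplication of $M_j$ by an invertible matrix preserves $\ker M_j^T$. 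In particular $\Psi$ is constant on each $\mathcal{G}$-orbit, so $\Psi(g.p) = \Psi(p)$ for every $g\in\mathcal{G}$ on an open dense $\mathfrak{U} \subset \mathcal{X}_{(s_1,s_2)}$, which is the first bullet.

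For dominance I would note that $\dim W^\perp = (k-h_1) + (k-h_2)$ for a generic $W \in G(i, U^\vee)$, so any sufficiently generic direct-sum decomposition $W^\perp = C_1 \oplus C_2$ with $\dim C_j = k-h_j$ yields centres satisfying the hypotheses of Lemma \ref{voidintersection}, and hence a lift $(M_1,M_2) \in \mathfrak{W}_0$ mapping to a preimage of $W$. Combined with $\dim \mathcal{X}_{(s_1,s_2)} = (k+1)i - 1$ and $\dim G(i, U^\vee) = i(k+1-i)$, the generic fibre of $\Psi$ has dimension $i^2 - 1 = \dim PGL(i)$, which is the first part of the second bullet.

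The identification of the generic fibre with $PGL(i)$ itself is the most delicate step. Fixing $W$, the fibre parametrises the additional framing data needed to reconstruct the tensor $\mathfrak{F}$ from $(C_1, C_2)$. Using the canonical form \eqref{canonicalform} together with Proposition \ref{decomp_canon_prop} and Corollary \ref{effetau}, one checks that the $V_1 \times V_2$-subgroup of $\mathcal{H}$ stabilises the generic projective class $[\mathfrak{F}]$ once its action is reabsorbed into the $(\mathbb{C}^*)^2/\mathbb{C}^*$-quotient. What remains is the shared factor $H \in GL(i)$ acting on $(U/(C_1+C_2))^\vee$, and modding out its residual diagonal scalar recovers $PGL(i)$; Proposition \ref{contocri} then ensures that this residual action is faithful on the generic fibre. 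The main obstacle I expect is precisely this last bookkeeping step: tracking how the asymmetric scalings produced by $V_1, V_2$ on the Plücker data of $[\tau_1|\tau_2]^T$ are neutralised by a judicious choice of $(\lambda,\mu)$, a reallocation made possible by the shared-$H$ structure defining $\mathcal{H}$, and verifying that the resulting $\Psi$ is independent of the choice of $PGL(k+1)$-representative of $p$.
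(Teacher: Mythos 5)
Your construction is in substance the paper's: the map you propose, $\Psi(p)=(C_1+C_2)^{\perp}$, is exactly what the paper's $\Psi$ computes to (on the canonical form, $\Psi(p_c)$ is the row span of $(I_i\,|\,0\,|\,0)$, i.e.\ $(C_1+C_2)^{\perp}$; see Remark \ref{finalone}), the equivariance argument is the same as Lemma \ref{biequiv}, and your fibre analysis via the canonical form and the absorption of the $V_1\times V_2$ blocks is the content of Proposition \ref{stabilizer}. Your dominance argument (exhibiting a preimage of a generic $W$ by splitting $W^{\perp}=C_1\oplus C_2$) is more direct than the paper's, which instead deduces dominance from the orbit dimension plus the fibre dimension theorem (Corollary \ref{dominance}).

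The genuine gap is the point you postpone to your last sentence and call bookkeeping: independence of $\Psi(p)$ from the representative. It is not bookkeeping, and with the recipe exactly as you state it, it fails. A general point $p\in\mathcal{X}_{(s_1,s_2)}$ determines $(M_1,M_2)$ only up to the left $GL(k+1)$-action besides the torus, because the tensor is a $GL(k+1)$-invariant of the pair up to scale (see \eqref{fundcan}); and that action moves $C_1+C_2$ to an arbitrary subspace of the same dimension, hence moves $(C_1+C_2)^{\perp}$ to an arbitrary point of $G(i,U^{\vee})$. The invariances you do verify (right $GL(h_1+1)\times GL(h_2+1)$ and the scalings) concern the group that sends $p$ to $g.p$ --- that is the first bullet --- not the ambiguity in representing one and the same $p$, so they do not make $\Psi$ well defined. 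The paper's Step 1 is precisely the device that replaces your recipe: $\Psi$ is built from the subspace $T_p\in G_2$ attached to the tensor (the dual Pl\"ucker datum, which is unaffected by the left action), pushed into $U^{\vee}$ by $\eta^{\vee}$ as in diagram \ref{diagrammacroce}, with the domain cut out by the condition $j(T_p)\cap\ker(\eta^{\vee})=\{0\}$; some such formulation (or another explicit gauge-fixing) is needed before any fibre count makes sense. Two smaller points: the identification of the general fibre with $PGL(i)$ is only sketched in your proposal, and a priori your fibre over $W$ also sweeps all splittings $W^{\perp}=C_1\oplus C_2$, so collapsing it to a single $PGL(i)$ really does require the explicit stabilizer computation of Proposition \ref{stabilizer}; and Proposition \ref{contocri} plays a different role than you assign it --- it shows that the $\Gamma\in GL(i)$ factor in \eqref{tauaction} acts trivially on the tensors (which is why $\mathcal{G}$ omits it), whereas the faithfulness of the residual $PGL(i)$ on the fibre again comes from the stabilizer computation.
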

\subsubsection{Step 1: the definition of $\Psi$} With the same notation adopted before, the inclusion of $T_p$ in $(U/C_1)^{\vee} \oplus (U/C_2)^{\vee}$ yields the horizontal short exact sequence in the diagram below. The map $j$ is given by $[\tau_1| \tau_2]^T$ after choosing suitable bases. Moreover, the matrix $[\tau_1| \tau_2]^T$ corresponds to a matrix $[M_1|M_2]$ where $C_j=ker(M_j^T)$. As a consequence, the map $\eta: U \rightarrow (U/C_1) \oplus (U/C_2)$ in \eqref{etadefinition}, which maps $u \in U$ to $M_1u-M_2u$, gives the vertical short exact sequence by duality; recall that $ker(\eta^{\vee})$ is isomorphic to $(U/C_1)^{\vee} \cap (U/C_2)^{\vee}$: see Lemma \ref{lemmanosnake}.
\begin{equation}
\label{diagrammacroce}
\xymatrix{
 & & 0 \ar[d] & &\\
 & & ker(\eta^{\vee}) \simeq (U/C_1)^{\vee} \cap (U/C_2)^{\vee} \ar[d]^{\gamma} & & \\
0 \ar[r] & T_p \ar[r]^j \ar@{-->}[dr] & (U/C_1)^{\vee} \oplus (U/C_2)^{\vee} \ar[r] \ar[d]^{\eta^{\vee}} & K \ar[r] & 0 \\
& & U^{\vee} \ar[d]& & \\
& & 0 & &
}
\end{equation}
By the Grassmann formula and the inequality $i < k+1$, the linear map $\eta^{\vee}$ generically maps $T_p$ to a subspace of $U^{\vee}$, which is isomorphic to $T_p$, as generically we have $\gamma(ker(\eta^{\vee})) \cap j(T_p)=\{0\}$. Then for a general point $p \in {\mathcal X}_{(s_1, s_2)}$ we set $\Psi(p)= [(\eta^{\vee} \circ j)(T_p) \subset U^{\vee}] \in G(i,U^{\vee})$.

\begin{remark}
In general, for any $i$-dimensional subspace $T_p$ the intersection $j(T_p) \cap Ker(\eta^{\vee})$ has dimension in $[0,i]$. When this dimension is $0$, we saw that $j(T_p)$ can be projected isomorphically onto $U^{\vee}$, as in Step 1. Therefore, the exceptional locus $Exc(\Psi)$ of $\Psi$ is given by the points $p$ such that the intersection $j(T_p) \cap Ker(\eta^{\vee})$ has dimension greater than or equal to $1$. If we denote by $Exc_j(\Psi)$ the points $p$ such that $\dim(j(T_p) \cap Ker(\eta^{\vee})) \leq j$ for $1 \leq j \leq i$, we have a stratification
$$
Exc_1(\Psi) \subseteq Exc_2(\Psi) \subseteq \ldots \subseteq Exc_i(\Psi).
$$
\end{remark}

\subsubsection{Step 2: the general fiber of $\Psi$} 
\begin{lemma}
\label{biequiv}
The map $\Psi$ is birationally ${\mathcal G}$-equivariant.
\end{lemma}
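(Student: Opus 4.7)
The plan is to compute $\Psi(g.p)$ explicitly in matrix form and observe that the two independent ways $g$ acts on the representative data cancel exactly. First I would unpack the matrix form of $\Psi$. If the columns of $[\tau_1|\tau_2]^T$ give a basis of $T_p \subseteq (U/C_1)^{\vee} \oplus (U/C_2)^{\vee}$ and $[M_1|M_2]$ is the Grassmann-dual representative (so that the row span of $[M_1|M_2]$ annihilates the column span of $[\tau_1|\tau_2]^T$ under the natural pairing), then $\eta^{\vee}$ is realised by the $(k+1) \times (h_1+h_2+2)$ matrix $[M_1 | {-}M_2]$ acting on column vectors, and $\Psi(p)$ is the column span in $U^{\vee}$ of the $(k+1) \times i$ matrix $M_1\tau_1^T - M_2 \tau_2^T$.

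Next I would track how these two matrices transform under the $\mathcal{G}$-action. Directly from the definition in \eqref{tauaction}, a lift $g = (\Delta_1, \Delta_2) \in \mathcal{H}$ sends $[\tau_1|\tau_2]^T$ to $[\,a\,\Delta_1\tau_1^T \,|\, b\,\Delta_2\tau_2^T\,]^T$ for some scalars $a, b \in \mathbb{C}^*$ produced by the $(\mathbb{C}^*)^2/\mathbb{C}^*$ twists $(\alpha, \beta, \zeta)$. Via the Grassmannian duality $G_1 \simeq G_2$ used in Section~\ref{action} to identify $\mathfrak{Q}_0$ with $\mathfrak{G}_0$, the corresponding transformation on the dual side is forced to be $M_1 \mapsto a^{-1}M_1\Delta_1^{-1}$ and $M_2 \mapsto b^{-1}M_2\Delta_2^{-1}$, as one checks by requiring the annihilation relation $M_1\tau_1^T + M_2\tau_2^T = 0$ to be preserved. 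In particular, the centres $C_j = \ker(M_j^T)$ are fixed by $g$, so $\eta^{\vee}$ and its transform $\eta'^{\vee}$ share the same source $(U/C_1)^{\vee}\oplus(U/C_2)^{\vee}$ and target $U^{\vee}$. Substituting into the matrix formula for $\Psi(g.p)$ yields the column span of
$$(a^{-1}M_1\Delta_1^{-1})(a\,\Delta_1\tau_1^T) - (b^{-1}M_2\Delta_2^{-1})(b\,\Delta_2\tau_2^T) \;=\; M_1\tau_1^T - M_2\tau_2^T,$$
which is exactly $\Psi(p)$. Notably the block structure imposed on $\Delta_j$ plays no role in this cancellation; it becomes relevant only in Step 3, for identifying the generic fibre with $PGL(i)$.

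Finally, I would verify that the open set $\mathfrak{U}$ on which $\Psi$ is defined can be chosen $\mathcal{G}$-stable, so that both sides of $\Psi(g.p) = \Psi(p)$ make sense simultaneously. The exceptional locus is cut out by the non-transversality condition $T_p \cap \ker(\eta^{\vee}) \neq \{0\}$; the same bookkeeping as above yields $\ker(\eta'^{\vee}) = \mathrm{diag}(a\Delta_1, b\Delta_2)\,\ker(\eta^{\vee})$ (since the linear map $\mathrm{diag}(a\Delta_1,b\Delta_2)$ is an isomorphism of $V$), so the transversality condition is $\mathcal{G}$-invariant and one may take $\mathfrak{U} := \mathcal{X}_{(s_1,s_2)}\setminus\mathrm{Exc}(\Psi)$. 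The main obstacle is really just careful bookkeeping: correctly deriving the dual transformation of $[M_1|M_2]$ from the defining action on $[\tau_1|\tau_2]^T$, and consistently handling the $(\mathbb{C}^*)^2/\mathbb{C}^*$ scalars. Once set up, the matching cancellations $\Delta_j^{-1}\Delta_j = I$ and $a^{-1}a = b^{-1}b = 1$ do the rest.
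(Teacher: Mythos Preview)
Your approach is computationally explicit and differs from the paper's: you track how both the $\tau$-representative and a dual $M$-representative transform under $g$, and exhibit a direct cancellation $\Delta_j^{-1}\Delta_j = I$. The paper instead argues geometrically with a fixed $\eta^{\vee}$, invoking the common block $H$ shared by $\Delta_1$ and $\Delta_2$.

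There is, however, a real gap, and your own parenthetical remark exposes it. You note that ``the block structure imposed on $\Delta_j$ plays no role in this cancellation''. But if your argument truly established $\Psi(g.p)=\Psi(p)$ for \emph{arbitrary} $(\Delta_1,\Delta_2)\in GL(h_1+1)\times GL(h_2+1)$, then $\Psi$ would be invariant under that whole group; since this group acts transitively on a dense open subset of $\mathcal{X}_{(s_1,s_2)}$ (Theorem~\ref{homogeneous}), $\Psi$ would be generically constant, in direct contradiction with the dominance proved in Corollary~\ref{dominance}. So the cancellation cannot be as universal as you claim.

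The failure is in the step ``the corresponding transformation on the dual side is forced to be $M_j\mapsto a_j^{-1}M_j\Delta_j^{-1}$''. The annihilation relation pins down only the \emph{row span} of $[M_1'|M_2']$, i.e.\ a point of $G_1$; your formula is one representative among a full left-$GL(k+1)$ orbit of equally valid ones, and you have selected precisely the one for which the new $\eta'^{\vee}$ happens to cancel against the new $j'$. A different representative $[GM_1\Delta_1^{-1}\,|\,GM_2\Delta_2^{-1}]$ produces $\eta'^{\vee}=G\cdot\eta^{\vee}\cdot\mathrm{diag}(\Delta_1^{-1},\Delta_2^{-1})$ and yields $G\cdot\Psi(p)$ rather than $\Psi(p)$. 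To repair the argument you must show that your choice of $M_j'$ coincides with the one implicit in the paper's construction of $\eta^{\vee}$ from $p$ via diagram~\eqref{diagrammacroce}; it is exactly at this point that the special block form of $\Delta_j$ --- with the \emph{same} $H$ appearing in both --- becomes essential, as the paper's proof uses.
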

\begin{proof}
As claimed in Theorem \ref{ratfibr}, it suffices to prove that there exists a nonempty open set ${\mathfrak U}$ of ${\mathcal X}_{(s_1,s_2)}$ such that $\Psi(g.p)=\Psi(p)$ for every $p \in {\mathfrak U}$ and every $g \in {\mathcal G}$. Let ${\mathfrak U}$ be the maximal domain of definition of $\Psi$. By Step 1, any point $p \in {\mathfrak U}$ defines a vector space $T_p$ such that $j(T_p) \cap \gamma(ker(\eta^{\vee}))=\{0\}$. By definition of the group ${\mathcal G}$, and its elements $g$, the subspace $T_p$ is transformed by $H$ into another $i$-dimensional subspace $T_{g.p}$, which can not intersect $\gamma(ker(\eta^{\vee})$, as the transformation is bijective. Therefore, $T_p$ and $T_{g.p}$ are mapped one another by $H$. As a consequence, they define the same point in the Grassmanniann $G(i, U^{\vee})$. This can be summarized by saying that $\Psi(g.p)= \Psi(p)$ for a general point $p \in {\mathcal X}_{(s_1, s_2)}$ and every $g \in {\mathcal G}$.
\end{proof}

\begin{proposition}
\label{stabilizer}
The stabilizer of the action of ${\mathcal G}$ on ${\mathcal X}_{(s_1,s_2)}$ is given by the subgroup of matrices
$$
\Delta_1= \left(
\begin{array}{cc}
\delta I_i & 0 \\
0 & V_1
\end{array}
\right), \qquad 
\Delta_2= \left(
\begin{array}{cc}
\delta I_i & 0 \\
0 & V_2
\end{array}
\right),
$$
where $V_j \in GL(h_j+1-i)$ for $j=1,2$ and $\delta \in {\mathbb C}^*$. Therefore, the general fiber of $\Psi$ has dimension $i^2-1$.
\end{proposition}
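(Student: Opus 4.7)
The plan is to characterize the stabilizer of a general point $p\in\mathcal{X}_{(s_1,s_2)}$ by unpacking the quotient structure and then use orbit--stabilizer to read off the fiber dimension of $\Psi$. Represent $p$ by an $(h_1+h_2+2)\times i$ matrix $[\tau_1|\tau_2]^T$ whose columns span $T_p$. A pair $(\Delta_1,\Delta_2)\in\mathcal{H}$ descends to a stabilizer element in $\mathcal{G}$ if and only if there exist scalars $(\lambda,\mu)\in(\mathbb{C}^*)^2$ and $\Gamma\in GL(i)$ with $\lambda^{-1}\Delta_1\tau_1^T=\tau_1^T\Gamma$ and $\mu^{-1}\Delta_2\tau_2^T=\tau_2^T\Gamma$. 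Using $GL(i)$ to normalize, write $\tau_1^T=\begin{pmatrix} I_i \\ A\end{pmatrix}$ and $\tau_2^T=\begin{pmatrix} C \\ B\end{pmatrix}$, where for generic $p$ the matrices $A,B$ are generic and $C\in GL(i)$ has distinct eigenvalues.

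Substituting $\Delta_j=\mathrm{diag}(H,V_j)$, the top-block equations give $\Gamma=\lambda^{-1}H$ together with the commutation $HC=(\mu/\lambda)\,CH$, while the bottom blocks give $V_1A=AH$ and $V_2B=(\mu/\lambda)\,BH$. Genericity of $C$ forces $\mu/\lambda=1$ and puts $H$ in the centralizer of $C$; combining this with the compatibility constraints on $A,B$ coming from the bottom equations, and using the generic position of the row/column spans of $A$ and $B$, a careful argument rules out any non-scalar centralizer element and forces $H=\delta I_i$ for some $\delta\in\mathbb{C}^*$. This is the core technical step and the principal obstacle of the proof. Once $H=\delta I_i$ is substituted, the bottom equations are compatible with any $V_j\in GL(h_j+1-i)$ when interpreted on the complementary $(h_j+1-i)$-dimensional summand of $(U/C_j)^\vee$, giving precisely the claimed stabilizer subgroup of $\mathcal{H}$, of dimension $1+(h_1+1-i)^2+(h_2+1-i)^2$.

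For the dimension count, the diagonal $\mathbb{C}^*$-scaling embedded in $(\mathbb{C}^*)^2/\mathbb{C}^*$ acts freely on this subgroup through the parameter $\delta$, so the stabilizer in $\mathcal{G}$ has dimension $(h_1+1-i)^2+(h_2+1-i)^2$. By orbit--stabilizer, the general $\mathcal{G}$-orbit on $\mathcal{X}_{(s_1,s_2)}$ has dimension $\dim\mathcal{G}-\dim\mathrm{Stab}=i^2-1$. Since by Lemma \ref{biequiv} the $\mathcal{G}$-orbits lie inside the fibers of $\Psi$, and $\dim\mathcal{X}_{(s_1,s_2)}-\dim G(i,U^\vee)=i^2-1$, the general orbit fills the general fiber, so the general fiber of $\Psi$ has dimension $i^2-1$ as claimed.
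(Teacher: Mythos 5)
Your setup is genuinely different from the paper's, and the difference is where the trouble lies. The paper tests stabilization only at the canonical point $p_c$, represented by $\tau_{j,c}^T=\left(\begin{smallmatrix}I_i\\0\end{smallmatrix}\right)$, and demands that this representing matrix be reproduced \emph{exactly} within its $(\mathbb{C}^*)^2/\mathbb{C}^*$-orbit; equating the matrices entrywise immediately gives $H=\tfrac{1}{\zeta\alpha_j}I_i$. It never allows the change of basis $\Gamma\in GL(i)$ of $T_p$ that you (quite reasonably, in view of Proposition \ref{contocri}, which shows that right multiplication by $\Gamma$ only rescales the tensor) build into your stabilizer condition. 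So you are computing the stabilizer of the \emph{point} of ${\mathcal X}_{(s_1,s_2)}$, while the paper computes the stabilizer of a representing matrix modulo block scalars; these are different computations and give different answers, and you would need to reconcile them.

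Within your own setup there is a genuine gap: the step you yourself flag as ``the core technical step'' --- that the bottom-block equations eliminate every non-scalar element of the centralizer of $C$ --- is asserted, not proved, and as outlined it does not go through. When $h_j+1-i\geq i$ the generic $A$ and $B$ are injective, and then for \emph{every} $H$ the equations $V_1A=AH$ and $V_2B=BH$ admit invertible solutions (define $V_j$ on the column span of the relevant matrix by $Av\mapsto AHv$ and extend invertibly); the bottom blocks then impose no condition on $H$ whatsoever, and your argument only yields $H\in Z(C)$, an $i$-dimensional torus rather than the scalars. In the opposite regime the condition is that $H$ preserves $\ker A$ and $\ker B$, which does cut the torus down, but you have not shown it cuts it down to $\mathbb{C}^*I_i$, nor handled the mixed cases. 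Symmetrically, your claim that once $H=\delta I_i$ ``the bottom equations are compatible with any $V_j$'' fails unless $A=0$: the equation $V_1A=\delta A$ forces $V_1$ to restrict to $\delta\cdot\mathrm{id}$ on the column span of $A$, so the subgroup you would actually obtain is not the one stated in the Proposition. Until this step is genuinely closed (or you switch to the paper's matrix-level convention and justify that it computes the stabilizer of the action on ${\mathcal X}_{(s_1,s_2)}$), the orbit--stabilizer count and the conclusion that the general fiber of $\Psi$ has dimension $i^2-1$ are not established.
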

\begin{proof}
Pick the point $p_c \in {\mathcal X}_{(s_1,s_2)}$ which corresponds to the $\left({({\mathbb C})^2}^*/{\mathbb C}^*\right)$-orbit of $T_{p_c}$ and the Grassmann tensor ${\mathfrak F}_c$. An element $g$ belongs to the stabilizer of $p_c$ if and only if $g.p_c=p_c$. The Grassmann tensor ${\mathcal F}_c$ is associated with the orbit $[z \lambda_1 \tau_{1,c}| z \lambda_2 \tau_{2,c}]^T$, where $\tau_{j,c}$ is given in \eqref{tauci} for $j=1,2$. The action $g.p_c$ is associated with the $\left({({\mathbb C})^2}^*/{\mathbb C}^*\right)$-orbit $[\zeta \alpha_1 \tau_{1,c}\Delta_1^T| \zeta \alpha_2 \tau_{2,c}\Delta_2^T]^T$. An element $g$ belongs to the stabilizer of $p_c$ if and only if $g.p_c=p_c$. Therefore for every $z$ and $\lambda_j$ we have
$$
\left(
\begin{array}{c}
z\lambda_j \\
0
\end{array}
\right)
= \tau_{j,c}^T=
\tau_{j,c}\Delta_j^T=
\left(
\begin{array}{c}
z\lambda_j \zeta \alpha_j H^T \\
0  
\end{array}
\right)
$$
This implies that $H=\frac{I_i}{\zeta \alpha_j}$. Hence the claim is proved because the dimension of the stabilizer is $$\dim(GL(h_1+1-i))+ \dim(GL(h_2+1-i))+\dim(Z(GL(i)) - \dim(({\mathbb C}^*)^2/({\mathbb C}^*))= $$
$$
=(h_1+1-i)^2+(h_2+1-i)^2,$$ where $Z(GL(i))$ is the group of scalar matrices in $GL(i)$.
\end{proof}

\begin{remark}
\label{finalone}
Let us consider the point $p_c \in {\mathcal X}_{(s_1, s_2)}$ that corresponds to the tensor ${\mathfrak F}_c$. The image $\Psi(p_c)$ is the $i$-dimensional subspace in $U^{\vee}$ generated by the rows of the matrix $(I_i | 0_{i,h_1+1-i}|0_{i,h_2+1-i})$ due to \eqref{tauci}. According to Proposition \ref{stabilizer}, the preimage of it with respect to $\Psi$ is a ${\mathcal G}$-orbit corresponding to Grassmann tensors of type $\bigwedge^{s_2+1}\Delta_1 {\mathfrak F_c} \bigwedge^{s_1+1}\Delta_1^T$, where
$$
\Delta_1= \left(
\begin{array}{cc}
H & 0 \\
0 & I_{h_1+1-i}
\end{array}
\right), \qquad 
\Delta_2= \left(
\begin{array}{cc}
H & 0 \\
0 & I_{h_2+1-i}
\end{array}
\right),
$$
and $H \in PGL(i)$.
\end{remark}

\subsubsection{Step 3: the map $\Psi$ is surjective}

\begin{corollary}
\label{dominance}
The rational map $\Psi: {\mathcal X}_{(s_1,s_2)} \dashrightarrow G(i, U^{\vee})$ is dominant.
\end{corollary}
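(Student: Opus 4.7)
The plan is to deduce dominance from a dimension count that is already almost completely set up by Proposition \ref{stabilizer}. First I would record the dimensions of the three relevant spaces: the source $\mathcal{X}_{(s_1,s_2)}$ has dimension $(k+1)i - 1$ (noted just before Remark \ref{strat}); the target $G(i,U^{\vee})$ has dimension $i(k+1-i)$; and by Proposition \ref{stabilizer} the general fiber of $\Psi$ has dimension $i^2 - 1$. I would also observe that $i \leq k$, since the hypothesis $k > \max\{h_1,h_2\}$ forces $h_1 + h_2 < 2k$, hence $i = h_1 + h_2 - k + 1 \leq k < k+1$, so $G(i,U^{\vee})$ is a genuine Grassmannian.

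Next I would apply the fiber dimension theorem to the rational map $\Psi$ between the irreducible varieties $\mathcal{X}_{(s_1,s_2)}$ and $\overline{\Psi(\mathcal{X}_{(s_1,s_2)})} \subseteq G(i,U^{\vee})$. This gives
\[
\dim \overline{\Psi(\mathcal{X}_{(s_1,s_2)})} \;=\; \dim \mathcal{X}_{(s_1,s_2)} - \dim(\text{general fiber}) \;=\; (k+1)i - 1 - (i^2 - 1) \;=\; i(k+1-i).
\]
This matches $\dim G(i, U^{\vee})$ exactly.

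Finally, I would conclude as follows: $\overline{\Psi(\mathcal{X}_{(s_1,s_2)})}$ is an irreducible closed subvariety of the irreducible variety $G(i,U^{\vee})$ having the same dimension as the ambient space, so the two must coincide. Hence $\Psi$ is dominant.

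Given that Proposition \ref{stabilizer} does the real work of identifying fibers with orbits and computing their dimension, the only genuine checks here are numerical; there is no substantive obstacle. The most delicate point in the chain is really the fiber-dimension identification in Proposition \ref{stabilizer} (which relies on Remark \ref{finalone} to ensure that the fiber is a \emph{single} $\mathcal{G}$-orbit rather than a union of orbits), and once that is granted the surjectivity in Corollary \ref{dominance} reduces to the arithmetic identity $(k+1)i - 1 - (i^2-1) = i(k+1-i)$.
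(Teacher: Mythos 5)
Your proposal is correct and follows essentially the same route as the paper: both use the fiber dimension $i^2-1$ supplied by Proposition \ref{stabilizer} together with the fiber dimension theorem to show that the closure of the image has dimension $(k+1)i-1-(i^2-1)=i(k+1-i)=\dim G(i,U^{\vee})$, and then conclude by irreducibility of the Grassmannian. The only cosmetic difference is that the paper phrases the count as an inequality $\dim\overline{\Psi({\mathcal X}_{(s_1,s_2)})}\geq \dim{\mathcal X}_{(s_1,s_2)}-(i^2-1)$ coming from the single maximal orbit through the point corresponding to ${\mathfrak F}_c$, whereas you use the general-fiber statement directly; the substance is identical.
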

\begin{proof}Let ${\mathcal I}$ be closure of $\Psi({\mathcal X}_{(s_1,s_2)})$. By Lemma \ref{stabilizer}, there is an orbit of maximal dimension (that of the point corresponding to ${\mathfrak F}_c$) which is $\dim({\mathcal G}) - \dim (Stab({\mathfrak F}_c))= i^2-1$. Therefore, by the Fiber Dimension Theorem, we have $$
\dim({\mathcal I}) \geq \dim\left({\mathcal X}_{(s_1, s_2)}\right) - i^2+1 =(k+1)(h_1+h_2-k+1)- i^2=\dim\left(G(i, U^{\vee})\right).
$$
Thus the claim follows.
Since a dominant map between projective varieties is surjective, every $i$-dimensional subspace in $U^{\vee}$ has a preimage under $\Psi$. 
\end{proof}

\begin{remark}
In other words, at least theoretically, given an $i$-dimensional space $W$ in $U^{\vee}$ it is possible to ``reconstruct a bifocal Grassmann tensor", i.e. a point in the preimage of $W$ under $\Psi$.
\end{remark}

\end{document}